\newcommand{\HH}{\mathcal{H}}
\newcommand{\EE}{\mathcal{E}}
\newcommand{\DD}{\mathcal{D}}
\newcommand{\Sym}{\mathfrak{S}}
\newcommand{\double}{\mathbbmtt}
\newlength{\tangle}
\newcommand{\htangle}[2]{\settoheight{\tangle}
  {\epsfysize=#2 \epsfbox{figures/tangle.#1}}
  \raisebox{-.5\tangle}{\epsfysize=#2 \epsfbox{figures/tangle.#1}}}
\newcommand{\wtangle}[2]{\settoheight{\tangle}
 {\epsfxsize=#2 \epsfbox{figures/tangle.#1}}
  \raisebox{-.5\tangle}{\epsfxsize=#2 \epsfbox{figures/tangle.#1}}}
\newtheorem{thm}{Theorem}
\newtheorem{lemma}[thm]{Lemma}
\newtheorem{prop}[thm]{Proposition}
\newtheorem{cor}[thm]{Corollary}
\newtheorem{defn}[thm]{Definition}
\newtheorem{remark}[thm]{Remark}
\journal{}
\begin{document}

\begin{frontmatter}



\title{Symmetrizers and antisymmetrizers for the BMW algebra}


\author[s]{R. Dipper }
\ead{rdipper@mathematik.uni-stuttgart.de}

\author[c]{J. Hu\corref{cor1}}
\ead{junhu303@yahoo.com.cn}

\author[s]{F. Stoll }
\ead{stoll@mathematik.uni-stuttgart.de}

\address[s]{ Institut f\"ur Algebra und Zahlentheorie,
  Universit\"at Stuttgart,
  Pfaffenwaldring 57, 70569 Stuttgart,
  Germany}
\address[c]{ Department of Mathematics, Beijing Institute of
  Technology, Beijing 100081, PR China}

\cortext[cor1]{Corresponding author}

\begin{abstract}
  Let $n\in\mathds{N}$ and $B_n(r,q)$ be the generic
  Birman-Murakami-Wenzl algebra with respect to indeterminants $r$ and
  $q$. It is known that $B_n(r,q)$ has two distinct linear
  representations generated by two central elements of $B_n(r,q)$
  called the symmetrizer and antisymmetrizer of $B_n(r,q)$. These
  generate for $n\geq 3$ the only one dimensional two sided ideals
  of $B_n(r,q)$ and generalize the corresponding notion for Hecke
  algebras of type $A$. The main result Theorem \ref{theorem:actionE_1} in this paper explicitly 
   determines the coefficients of these
  elements with respect to the graphical basis of  $B_n(r,q)$.
\end{abstract}

\begin{keyword}
 Birman-Murakami-Wenzl algebra \sep symmetrizer \sep antisymmetrizer



\MSC 17B37 \sep 20G05 \sep 20C08

\end{keyword}

\end{frontmatter}



\input epsf

\section{Introduction}

The sum (signed sum) of all group elements of the symmetric group $\Sym_n$
in the group ring $R\Sym_n$ are central elements generating precisely
the one dimensional (two sided) ideals and hence linear characters of
$\Sym_n$ for any commutative ring $R$. They are called symmetrizer
(antisymmetrizer) of $R\Sym_n$ and play an important role in the
representation theory of symmetric groups. This applies in particular
to tensor space, where Schur-Weyl duality  connects representation
theory of symmetric groups and general (or special) linear
groups. All this generalizes to Hecke algebras of type $A$ and the
various quantizations of general (special) linear groups.

The Brauer algebras were defined by Richard Brauer in 1937
\cite{brauer} as centralizing algebras of symplectic and orthogonal
groups acting on tensor space. Quantized versions of those were first
defined and studied independently by Birman and Wenzl
\cite{birmanwenzl} and by Murakami \cite{murakami}. They are multi
parameter algebras which degenerate in the classical limit to Brauer
algebras and are called today Birman-Murakami-Wenzl algebras
(BMW-algebras for short). It is known that there are generalisations
of symmetrizer and antisymmetrizer for BMW-algebras and these
generate the only one dimensional (two sided) ideals of those
provided that $n$ is greater than two (see e.~g.~\cite{huxiao}).

Morton and Wassermann \cite{mortonwassermann} showed that the
BMW-algebra $B_n(r,q)$ with parameters $r$ and $q$ is
isomorphic to Kauffman's tangle algebra \cite{kauffman}, which is
generated by $n$-tangles. Moreover, $B_n(r,q)$ possesses a basis
consisting of tangles whose shadows in the plane (that is not
distinguishing over- and undercrossings) are precisely the Brauer
diagrams producing Brauer's graphical basis of the Brauer algebra.
In this paper we shall determine explicit formulas for the
coefficients of the symmetrizer $x=x_n$ and the antisymmetrizer
$y=y_n$ with repect to such a  basis.

$B_n(r,q)$ comes with a filtration by ideals $I_f$ ($0\leq
f\leq[\frac{n}2]$), where $I_f$ is the ideal generated by tangles with
at least $2f$ horizontal edges. Already in \cite{huxiao} it was shown
that the coefficients of $x$ and $y$ differ on tangles with the same
number of horizontal edges only by a power of $q$ depending on the
number of crossings in the tangle.
However, these coefficients of $x$ and $y$ could not be determined in
\cite{huxiao}.  In this
paper, we shall not only reprove that fact but also explicitly
calculate these coefficients, see Theorem \ref{theorem:actionE_1}. In particular, we verify the conjecture
proposed in  \cite[line -11, page 2921]{huxiao}.

We remark that several other forms of the symmetrizer and antisymmetrizer (and more generally, a complete set of pairwise orthogonal primitive idempotents)
have been given in \cite{heckenbergerschueler}, \cite{IMO}, \cite{LR} by using the Jucys-Murphy operators of BMW algebras and/or fusion procedure and/or inductive constructions.
However, it seems to be difficult to derive the coefficients of
$x_n$ and $y_n$ in the graphical basis of $B_n(r,q)$ from those
formulae. Our approach here for determining the coefficients works
directly with a special basis of $n$-tangles (introduced in
\cite{huxiao}) and calculations with those using the relations for
these basis elements.

\section{Preliminaries}
In this section we recall some basic results on
Birman-Murakami-Wenzl algebras.
\begin{defn}\cite{birmanwenzl,murakami}
  The generic \emph{Birman-Murakami-Wenzl algebra} $B_n=B_n(r,q)$ is
  the unital associative $\mathds{Q}(r,q)$-algebra generated by the
  elements $T_i^{\pm1}$ and $E_i$ for $1\leq i\leq n-1$ subject to the
  relations:
  \begin{alignat}{2}
    T_i-T_i^{-1}&=(q-q^{-1})(1-E_i), &\quad&\text{ for } 1\leq i\leq n-1,\\
    E_i^2&=\delta E_i, &&\text{ for } 1\leq i\leq n-1,\\
    T_iT_{i+1} T_i&=T_{i+1} T_iT_{i+1}, &&\text{ for } 1\leq i\leq n-2,\\
    T_iT_j&=T_jT_i, &&\text{ for }|i-j|>1,\\
    E_iE_{i+1} E_i&=E_i,\;  E_{i+1}E_i E_{i+1}=E_{i+1},
    &&\text{ for } 1\leq i\leq n-2,\\
    T_iT_{i+1}E_i&=E_{i+1} E_i,\; T_{i+1}T_iE_{i+1}=E_iE_{i+1}, &&\text{
      for } 1\leq i\leq n-2,\\
    E_iT_i&=T_iE_i=r^{-1}E_i &&\text{ for } 1\leq i\leq n-1,\\
    E_iT_{i+1}E_i&=rE_i,\;E_{i+1}T_iE_{i+1}=rE_{i+1}, &&\text{ for }
    1\leq i\leq n-2,
      \end{alignat}
  where $\delta=1+\dfrac{r-r^{-1}}{q-q^{-1}}$.
\end{defn}

The Birman-Murakami-Wenzl algebra is isomorphic to an algebra
given in terms of certain diagrams which we will introduce now
\begin{defn}{\cite{kauffman}}
  \begin{enumerate}
  \item A \emph{tangle} is a knot diagram inside a rectangle
    consisting of a finite number of vertices
    in the top and the bottom row of the rectangle
    (not necessarily the same number) and a
    finite number of arcs inside the rectangle such that each vertex
    is connected to another vertex by exactly one arc, arcs either
    connect two vertices or are closed curves. Two tangles are
    \emph{regularly isotopic} if they are related by a sequence of
    Reidemeister Moves II and III (see
    \eqref{eqn:Reidemeister}) and isotopies fixing the
    boundary of
    the rectangle.
    \begin{equation}\label{eqn:Reidemeister}
    \htangle{101}{1cm}= \htangle{102}{1cm}\,\, ,\quad
    \htangle{103}{1cm}= \htangle{104}{1cm}\,\, ,\quad
    \htangle{105}{1cm}= \htangle{106}{1cm} \quad .
    \end{equation}
  \item
    Kauffman's tangle algebra is the $\mathds{Q}(r,q)$-algebra
    generated by all tangles
    with $n$ vertices in the top row and in the bottom row subject to
    the following relations which can be applied to a local disk of
    the tangle:
    \begin{itemize}
      \item Regular isotopy,
      \item $\htangle{107}{.8cm}-\htangle{108}{.8cm}=(q-q^{-1})\left(\,\,
        \htangle{110}{.8cm}-\htangle{109}{.8cm}\,\,\right)$,
      \item 
         $\htangle{111}{1.6cm}=r^{-1}\htangle{113}{1.6cm},\quad
         \htangle{112}{1.6cm}=r\htangle{113}{1.6cm}$,
      \item
        $\htangle{114}{.8cm}=\delta$.
    \end{itemize}
  \end{enumerate}
\end{defn}
The following theorem was shown using Kauffman's invariant of knots.
\begin{thm}{\cite{mortonwassermann}}\label{theorem:isomorphism}
  Kauffman's tangle algebra is isomorphic to the Birman-Murakami-Wenzl
  algebra. The isomorphism maps
  \[
  T_i\mapsto \htangle{116}{.8cm}\;,\quad E_i\mapsto \htangle{115}{.8cm}\;,
  \]
  here the crossing and the horizontal strands respectively
  connect the $i$-th and
  $i+1$-st vertices of the top and bottom row.
\end{thm}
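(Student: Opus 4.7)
The plan is to construct a $\mathds{Q}(r,q)$-algebra homomorphism $\Phi$ from $B_n(r,q)$ to Kauffman's tangle algebra by the assignment of generators given in the statement, and then to establish bijectivity in three stages: well-definedness, surjectivity, and a dimension-matching argument that yields injectivity.

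For well-definedness, each of the BMW relations (1)--(8) reduces to a purely local identity in Kauffman's tangle algebra. The braid relation $T_iT_{i+1}T_i=T_{i+1}T_iT_{i+1}$ is precisely Reidemeister~III; the far commutations $T_iT_j=T_jT_i$ for $|i-j|>1$ are obvious planar isotopies; and the quadratic relation $T_i-T_i^{-1}=(q-q^{-1})(1-E_i)$ is exactly the Kauffman skein relation applied to a local disk. The identity $E_i^2=\delta E_i$ follows from the closed-loop evaluation, while $T_iE_i=E_iT_i=r^{-1}E_i$ is the curl relation. The remaining relations $E_iE_{i+1}E_i=E_i$, $E_{i+1}E_iE_{i+1}=E_{i+1}$, $T_iT_{i+1}E_i=E_{i+1}E_i$, $T_{i+1}T_iE_{i+1}=E_iE_{i+1}$ and $E_iT_{i+1}E_i=rE_i$, $E_{i+1}T_iE_{i+1}=rE_{i+1}$ are checked by drawing the composite tangles, removing closed loops and Reidemeister~I curls with the loop and curl relations, and then invoking regular isotopy.

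Surjectivity is obtained by placing a given tangle in general position with a Morse-type height function and slicing it horizontally so that each strip contains at most one crossing or one cap-cup pair; each such strip is visibly the image under $\Phi$ of some $T_j^{\pm1}$ or $E_j$, so an arbitrary tangle is a product of images.

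The main obstacle is injectivity, and this is where Kauffman's invariant of knots must enter essentially. The strategy is to show that both algebras are free of rank $(2n-1)!!$ over $\mathds{Q}(r,q)$. On the algebraic side, one uses the relations (1)--(8) to reduce any word in the generators to one of $(2n-1)!!$ normal-form monomials indexed by Brauer diagrams on $2n$ vertices, thereby bounding $\dim B_n(r,q)\leq(2n-1)!!$. On the topological side, one produces for each Brauer diagram $D$ a distinguished tangle $t_D$ whose underlying planar shadow is $D$, and proves that the family $\{t_D\}$ is linearly independent: the key input is the Kauffman two-variable polynomial, which supplies a trace-like functional whose values on closures of products $t_D\,t_{D'}^{*}$ assemble into a non-degenerate pairing matrix. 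Together with surjectivity this forces $\dim B_n(r,q)=(2n-1)!!$ and $\Phi$ to be an isomorphism. The non-degeneracy of the Kauffman pairing is the delicate step; everything else is routine pictorial verification.
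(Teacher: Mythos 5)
The paper itself offers no proof of this statement: Theorem~\ref{theorem:isomorphism} is imported wholesale from \cite{mortonwassermann}, and the authors only remark beforehand that it ``was shown using Kauffman's invariant of knots.'' So there is no internal argument to compare yours against; the relevant benchmark is the cited reference, and your outline does follow its broad architecture --- define $\Phi$ on generators, verify the eight defining relations locally (skein relation, curl and loop removal, Reidemeister moves), get surjectivity by slicing a generic tangle into elementary strips, and then pin down the dimension as $(2n-1)!!$, the number of Brauer diagrams, with the Kauffman invariant entering at the injectivity step. That much is consistent with how the result is actually proved.

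As a proof, however, the proposal has a genuine gap precisely at the point you yourself flag as delicate, and that point is the entire mathematical content of the theorem. You reduce linear independence of the $(2n-1)!!$ tangles $t_D$ to the assertion that the Gram matrix of Kauffman-polynomial evaluations on the closures of the products $t_D\,t_{D'}^{*}$ is non-degenerate, but you give no argument for this, and it is not at all obvious: computing (or even bounding the rank of) that matrix without already knowing the structure of the algebra is a substantial task, essentially equivalent to analysing the weights of the Markov trace, and a priori the form could be degenerate on the span of the $t_D$ even if the $t_D$ are independent. Morton and Wassermann do not proceed by exhibiting such a pairing; their independence argument orders the totally descending tangles by underlying Brauer diagram and crossing data and runs a filtration/triangularity argument against the Brauer algebra, with the consistency of the Kauffman polynomial serving to show the tangle algebra does not collapse. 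Likewise, the upper bound $\dim B_n(r,q)\le(2n-1)!!$ via a normal form for words in the generators is asserted but not sketched, and the rewriting argument it requires is not routine. So the skeleton is the right one, but the two load-bearing steps are each left open.
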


In view of Theorem~\ref{theorem:isomorphism}, each equation in the
Birman-Murakami-Wenzl algebra has
a tangle analogue.
From now on, we will identify  both algebras via the isomorphism
and denote Kauffman's tangle algebra as well by
$B_n=B_n(r,q)$.

A \emph{Brauer $n$-diagram} is a planar graph in a rectangle of a plane, which consist of two rows (top row and bottom row) of $n$ points, with each point joined to precisely one other point (distinct from itself). We label the
points in the top row by $\{1,2,\dots,n\}$ and the points in the bottom row by $\{1^-,2^-,\dots,n^-\}$. Any edge of the form $(i,j)$ or $(i^-,j^-)$, where $1\leq i<j\leq n$, will be called a horizontal edge, while other edges (of the form $(i,j^{-})$) will be called vertical edges. One can obtain from each tangle a Brauer diagram by
forgetting the orientation of crossings. Conversely, for each Brauer
diagram $d$ one can choose a tangle $T(d)$ such that two strands cross
at least once, by choosing orientation of the crossings.
\begin{lemma}[\cite{mortonwassermann}]\label{lemma:tanglebasis}
  For each Brauer $n$-diagram $d$ choose a tangle $T(d)$ by choosing an orientation for
  each crossing (such that two strands never cross twice). Then the
  set
  \[\{T(d)\mid d\text{ Brauer diagram with $n$ vertices on the
    top/bottom row}\}\]
  is a basis for the
  BMW-algebra $B_n$.

\end{lemma}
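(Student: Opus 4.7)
The plan is to prove spanning and linear independence separately, and then combine them via a dimension comparison with the classical Brauer algebra.

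\textbf{Spanning.} I would proceed by a double induction: outer on $f$, the number of horizontal edges in the shadow Brauer diagram of the tangle, inner on the total number of crossings. The skein relation in Kauffman's tangle algebra expresses the difference between a positive and a negative crossing as $(q-q^{-1})$ times the difference between the ``identity'' and ``horizontal'' resolutions. Switching the orientation at a single crossing of a tangle $T$ therefore produces $T$ modulo lower-order terms: one tangle with the same shadow but one fewer crossing, and one tangle whose shadow has two more horizontal edges (hence strictly larger $f$). Reidemeister moves~II and~III together with the Kauffman relations involving $r$ and $\delta$ then remove any bubble (a pair of strands crossing twice) and any closed loop without altering $f$. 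By Theorem~\ref{theorem:isomorphism}, regularly isotopic tangles are equal in $B_n$, so after these reductions the tangle is regularly isotopic to one whose shadow is a genuine Brauer $n$-diagram $d$ in the sense that no two strands cross more than once. Matching its crossings to the orientation convention defining $T(d)$ costs only error terms of strictly larger $f$, which are handled by the outer induction.

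\textbf{Linear independence.} I would argue by dimension. The number of Brauer $n$-diagrams is $(2n-1)!!=(2n-1)(2n-3)\cdots 3\cdot 1$, so the spanning set has cardinality at most $(2n-1)!!$. The classical Brauer algebra $\mathcal{B}_n(\delta)$ has dimension exactly $(2n-1)!!$, with basis the Brauer diagrams. A specialization argument (realising $B_n(r,q)$ over a suitable discrete valuation ring and taking the classical limit $q\to 1$, $r\to 1$ with $\delta$ held fixed in the residue field) furnishes a surjective algebra homomorphism from $B_n(r,q)$ to $\mathcal{B}_n(\delta)$ sending each $T(d)$ to its shadow $d$. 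Since distinct Brauer diagrams are linearly independent in $\mathcal{B}_n(\delta)$, the $T(d)$ must already be linearly independent in $B_n(r,q)$, and therefore form a basis.

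\textbf{Main obstacle.} The delicate part is the spanning induction. Each rewriting step must strictly decrease the lexicographic pair (number of horizontal edges, total crossing number), with orientation-switching moves only ever increasing the first component. Verifying this requires placing the tangle in a canonical position before bubble-removal and checking that the orientation-matching step at the end does not inadvertently reintroduce bubbles. An alternative route that avoids the specialization argument is to use Kauffman's framed-link invariant directly: closures of distinct $T(d)$ produce framed links whose Kauffman polynomials are linearly independent over $\mathds{Q}(r,q)$, forcing independence of the $T(d)$.
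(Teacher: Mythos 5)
The paper offers no proof of this lemma; it is imported wholesale from Morton and Wassermann, so there is no internal argument to compare yours against. Your overall strategy — spanning via the skein relation, independence via a dimension comparison with the classical Brauer algebra — is in the spirit of the cited source, and the independence half is workable in outline, granted the standard care needed to choose a valuation ring in which $\delta=1+\frac{r-r^{-1}}{q-q^{-1}}$ has a well-defined image, to check that the integral form is torsion-free, and to verify that the reduction map to the diagram algebra is well defined on it. (Your alternative via Kauffman polynomials of closures would have to be a pairing/Gram-matrix argument; ``linear independence'' of single scalars in $\mathds{Q}(r,q)$ does not parse.)

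The genuine gap is in the spanning induction. Your description of the two resolution terms — ``one tangle with the same shadow but one fewer crossing, and one tangle whose shadow has two more horizontal edges'' — is false in general, and with it the well-foundedness of your lexicographic measure. Consider a tangle in which a top horizontal strand $(1,2)$ and a bottom horizontal strand $(1^-,2^-)$ form a clasp: they cross twice with the same strand on top, so Reidemeister II does not apply and the tangle is not regularly isotopic to any $T(d)$. You are forced to switch one of the two crossings, and one of the two resulting smoothings reconnects the four local ends into two \emph{vertical} strands, so the number of horizontal edges strictly \emph{decreases}. Thus resolutions can lower $f$ as well as raise it, your outer induction on $f$ does not terminate as stated, and crossing switches themselves never change $f$ at all (they preserve the shadow), so $f$ cannot control a sequence of switches either. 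The standard repair — and the route taken by Morton and Wassermann — is to make the total crossing number the primary induction variable (every smoothing strictly lowers it, whatever happens to $f$) and to control the switches, which preserve the crossing number, by reducing to totally descending tangles, i.e.\ tangles in which each crossing is first met as an overcrossing along a fixed ordering of the strands; one then shows a totally descending tangle with given connectivity is regularly isotopic to $r^k\delta^m T(d)$. Without some such secondary invariant for the switching phase, your reduction has no termination guarantee.
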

We fix some notation:
If $f$ is a non-negative number, such that $2f\leq n$, then let
\[
\widehat{E_f}=E_1E_3\cdots E_{2f-1}= \htangle{117}{.8cm}
\]

A \emph{composition}  $\mu\vDash n$ of $n$ is a sequence
$\mu=(\mu_1,\mu_2,\ldots,\mu_k)$ of non-negative integers $\mu_i$
such that $\sum_{i=1}^k\mu_i=n$. If $\mu$ is a composition of $n$ then
let $\Sym_\mu=\Sym_{\mu_1}\times \Sym_{\mu_2}\times \cdots\times
\Sym_{\mu_k}$  be the corresponding
Young subgroup of $\Sym_n$ and let $\DD_\mu$ be the set
of distinguished right coset representatives  of minimal length, such that
$\Sym_n=\Sym_\mu\DD_\mu$.

Throughout, we use the convention that $\Sym_n$ acts on $\{1,2,\dots,n\}$ from the right hand side. In other words, $$
\bigl((a)\sigma)\tau\bigr)=(a)(\sigma\tau),\quad\forall\,a\in\{1,2,\dots,n\},\,\,\forall\,\sigma,\tau\in\Sym_n.
$$
Let  $\HH_f$ be the set of $d\in\Sym_n$ satisfying the following conditions:
\begin{itemize}
\item
  $(1)d<(3)d<\ldots<(2f-1)d$, and
\item$(2i-1)d<(2i)d$ for $i=1,\ldots,f$, and
\item $(2f+1)d<(2f+2)d<\ldots<(n)d$.
\end{itemize}

Note that
$\HH_f$ is in bijection with the possible positions of precisely
$f$ horizontal
edges in the lower part of a Brauer diagram (resp.~in the upper
part). To $d\in\HH_f$ is associated the configuration of $f$
horizontal edges such that the vertex $(2i-1)d$ is connected to the
vertex $(2i)d$ for $i=1,\ldots,f$.
Note that this set is usually denoted by $\mathcal{D}_f$. We
choose $\HH_f$ to avoid confusion with the sets $\DD_\mu$.

If $w\in\Sym_n$ and $w=s_{i_1}s_{i_2}\cdots s_{i_l}$ is a reduced
expression, then let $\ell(w):=l, T_w:=T_{i_1}T_{i_2}\cdots T_{i_l}$. Note that $\ell(w), T_w$ is
independent of the reduced expression of $w$. Then by translating
Lemma~\ref{lemma:tanglebasis} we obtain
\begin{lemma}[\cite{huxiao}]\label{lemma:basis}
  \[\left\{T_uT_\sigma\widehat{E_f}T_d\mid
    0\leq f\leq [\tfrac{n}2],
    u\in\HH_f^{-1},d\in\HH_f,\sigma\in\Sym_{\{2f+1,\ldots,n\}}
  \right\}\]
  is a basis of $B_n$.
\end{lemma}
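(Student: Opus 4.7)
The plan is to match the proposed family with the Brauer-diagram basis of Lemma~\ref{lemma:tanglebasis}: for each tuple $(f,u,d,\sigma)$ I would exhibit a Brauer $n$-diagram $D=D(f,u,d,\sigma)$ such that $T_uT_\sigma\widehat{E_f}T_d$ is a representative tangle $T(D)$, and then check that the assignment is a bijection onto all Brauer $n$-diagrams.

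First, I would read off $D$ by drawing the composed tangle, placing $T_u$ on top and $T_d$ on the bottom. The middle factor $\widehat{E_f}$ contributes horizontal pairs $\{2k-1,2k\}$ for $k=1,\dots,f$ in each of its two rows together with vertical strands at positions $2f+1,\dots,n$. Using the right action of $\Sym_n$, the factor $T_u$ propagates its top pairs upward to $\{(2k-1)u^{-1},(2k)u^{-1}\}$ in the top row of the whole tangle, while $T_d$ propagates the bottom pairs downward to $\{(2k-1)d,(2k)d\}$. Since $\sigma$ acts only on the indices $\{2f+1,\dots,n\}$, it permutes the vertical strands of $\widehat{E_f}$ without affecting the horizontal blocks, and it encodes the bijection between the $n-2f$ unpaired upper and lower vertices. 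Via the bijection from the excerpt between $\HH_f$ and $f$-edge configurations, the triples $(u,d,\sigma)$ with $u\in\HH_f^{-1}$, $d\in\HH_f$, and $\sigma\in\Sym_{\{2f+1,\dots,n\}}$ then run bijectively through (top horizontal configuration, bottom horizontal configuration, matching of vertical endpoints). A direct count gives $\sum_{f=0}^{\lfloor n/2\rfloor}|\HH_f|^2(n-2f)!=(2n-1)!!$, the number of Brauer $n$-diagrams and hence the size of the basis produced by Lemma~\ref{lemma:tanglebasis}.

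The main obstacle is verifying that $T_uT_\sigma\widehat{E_f}T_d$ is a \emph{legitimate} representative tangle, i.e., that no two of its strands cross twice. The total number of crossings equals $\ell(u)+\ell(\sigma)+\ell(d)$ since all three factors are built from reduced expressions, and the defining inequalities of $\HH_f$ force $u$ and $d$ to sort the index sets $\{1,3,\dots,2f-1\}$ and $\{2f+1,\dots,n\}$ monotonically. This monotonicity prevents the two sides of any U-turn through a cap or cup of $\widehat{E_f}$ from re-crossing inside the surrounding braid pieces; a case analysis based on whether each endpoint of a strand lies in the horizontal or vertical block then rules out double crossings for every pair of strands. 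With this verification in hand, $T_uT_\sigma\widehat{E_f}T_d$ is one of the admissible choices of $T(D)$ from Lemma~\ref{lemma:tanglebasis}, and the bijection together with the cardinality match identifies the family as a basis of $B_n$.
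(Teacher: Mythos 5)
Your overall strategy --- read off the underlying Brauer diagram of each product, match cardinalities via $\sum_f|\HH_f|^2(n-2f)!=(2n-1)!!$, and verify that each product is an admissible representative tangle in the sense of Lemma~\ref{lemma:tanglebasis} --- is exactly the ``translation'' of the Morton--Wassermann basis that the paper (following \cite{huxiao}) invokes without proof, and your first two ingredients are correct. The gap is in the step you yourself single out as the main obstacle: it is \emph{not} true that no two strands of the written diagram $T_uT_\sigma\widehat{E_f}T_d$ cross twice. Take $n=3$, $f=1$, $\sigma=1$, $d=s_2s_1\in\HH_1$ and $u=s_1s_2$, so that $u^{-1}=s_2s_1\in\HH_1$ and the product is $T_1T_2E_1T_2T_1$. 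Inside the top braid $T_1T_2$ the strand that runs into the vertical edge of $E_1$ (it starts at top vertex $1$ and ends at middle position $3$) crosses \emph{both} strands ending at middle positions $1$ and $2$; since the cap of $E_1$ joins those two into a single strand of the full tangle, that cap strand and the vertical strand cross twice (and the same happens again in the bottom braid). Your proposed mechanism does not exclude this: the defining inequalities of $\HH_f$ only order the horizontal pairs among themselves and the vertical indices among themselves; they do not prevent a vertical-block index from being carried past an entire horizontal pair, which is precisely what produces a double crossing between a vertical strand and both arms of a U-turn.

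What rescues the lemma is a weaker statement that still has to be proved: every such double crossing occurs with the over/under pattern to which Reidemeister move II applies (possibly after Reidemeister III moves), so that $T_uT_\sigma\widehat{E_f}T_d$ is \emph{regularly isotopic to}, hence equal in $B_n$ to, a reduced representative $T(D)$ of its Brauer diagram with no skein correction terms. In the example above one checks directly from the relations that $T_1T_2E_1T_2T_1=E_2E_1E_2=E_2$, a crossing-free tangle for the same Brauer diagram, so the element is still a legitimate basis vector even though your criterion rejects its written form. Thus the case analysis you sketch would fail as stated; you must either control the orientations of the repeated crossings to justify the regular-isotopy reduction, or replace this step by a triangularity argument with respect to the filtration by the number of horizontal edges. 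The bijection and counting parts of your argument can be kept as they are.
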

\section{Symmetrizers and antisymmetrizers}
The Birman-Murakami-Wenzl algebra admits two one-dimensional
representations $\rho_1$ and $\rho_2$ given by
\begin{align*}
  \rho_1(T_i)&=q,&\rho_1(E_i)&=0 &\text{ and }\\
  \rho_2(T_i)&=-q^{-1},&\rho_2(E_i)&=0
\end{align*}
The \emph{symmetrizer} (\emph{antisymmetrizer}) is the quasi-idempotent
generating the ideal in $B_n$ corresponding to the representation
$\rho_1$ ($\rho_2$), that is the unique (up to scalars)
quasi-idempotent $x$ in
$B_n$ such that $xE_i=0$ and $xT_i=qx$ ($xE_i=0$ and
$xT_i=-q^{-1}x$). The goal of this paper will be to determine the
(anti-)symmetrizer.  Since the $\mathbb{Q}$-linear ring automorphism
$B_n\to B_n:T_i\mapsto
T_i, E_i\mapsto E_i, r\mapsto r,q\mapsto -q^{-1}$ maps the
antisymmetrizer to the symmetrizer, we restrict ourselves to determine
the symmetrizer $x\in B_n$.

If $S\subseteq \Sym_n$ is an arbitrary subset of the symmetric group,
let $\widehat{S}=\sum_{w\in S}q^{l(w)}T_w$.
The following lemma gives further information on the symmetrizer $x$:
\begin{lemma}[\cite{huxiao}]\label{lemma:huxiao}
  We have
  \[
  x=\sum_fc_fx_f
  \]
  where $c_f\in\mathbb{Q}(r,q)$ and
  $x_f=\widehat{\HH_f^{-1}}\widehat{\Sym_{\{2f+1,\ldots,n\}}}
    \widehat{E_f}\widehat{\HH_f}$.
\end{lemma}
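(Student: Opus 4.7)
The plan is to expand $x$ in the tangle basis of Lemma~\ref{lemma:basis}, stratify the expansion according to the number $f$ of horizontal edges, and then use the defining identities $xT_i = qx$ and $xE_i = 0$ together with the filtration of $B_n$ by the ideals $I_f$ (spanned by basis tangles with at least $2f$ horizontal edges) to pin down each $f$-layer up to an overall scalar.

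\medskip

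First I would write uniquely, using Lemma~\ref{lemma:basis},
\[
x = \sum_{f=0}^{[n/2]} x^{(f)}, \qquad x^{(f)} = \sum_{u \in \HH_f^{-1},\, \sigma \in \Sym_{\{2f+1,\ldots,n\}},\, d \in \HH_f} a^{(f)}_{u,\sigma,d}\, T_u T_\sigma \widehat{E_f} T_d,
\]
and aim to show $x^{(f)} = c_f x_f$ for each $f$. The argument then proceeds by descending induction on $f$: at the top level $f = [n/2]$ the ideal $I_{f+1}$ vanishes, and for smaller $f$ one works modulo $I_{f+1}$ after subtracting the contributions from higher layers that have already been determined.

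\medskip

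The key step will be the exploitation of $xT_i \equiv qx \pmod{I_{f+1}}$. By the quadratic relation $T_i^2 = 1 + (q-q^{-1})T_i - (q-q^{-1})r^{-1}E_i$, the only correction term one picks up over the Hecke-algebra rule involves an extra $E_i$; combined with the $\widehat{E_f}$ already present this either commutes away harmlessly or produces a tangle with $2f+2$ horizontal edges, hence lies in $I_{f+1}$. Consequently, modulo $I_{f+1}$, right-multiplying the rightmost factor $T_d$ by $T_i$ follows the pure Hecke rule. Matching coefficients yields, for each fixed $(u,\sigma)$, the Hecke-symmetrizer equation $\bigl(\sum_d a^{(f)}_{u,\sigma,d} T_d\bigr) T_i = q \bigl(\sum_d a^{(f)}_{u,\sigma,d} T_d\bigr)$, whose unique solution is $\kappa^{(f)}_{u,\sigma}\widehat{\HH_f}$ for some scalar $\kappa^{(f)}_{u,\sigma}$. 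Iterating the same analysis on the next sum forces $\sum_\sigma \kappa^{(f)}_{u,\sigma}T_\sigma = \lambda^{(f)}_u \widehat{\Sym_{\{2f+1,\ldots,n\}}}$, and then using the anti-involution of $B_n$ that fixes each $T_i$ and $E_i$ while reversing products handles the leftmost factor, giving $\sum_u \lambda^{(f)}_u T_u = c_f \widehat{\HH_f^{-1}}$. Reassembling these three pieces delivers $x^{(f)} = c_f x_f$.

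\medskip

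The hard part will be verifying that the correction term $\widehat{E_f} \cdot (\text{word}) \cdot E_i$ arising from the quadratic relation really does lie in $I_{f+1}$ in every case, particularly for the boundary generators $T_i$ with $i \in \{2f-1, 2f, 2f+1\}$, where $T_i$ can interact directly with the outermost horizontal edges of $\widehat{E_f}$. A careful combinatorial analysis of the distinguished coset representatives $\HH_f$ relative to each such $s_i$, together with a case distinction based on whether $d s_i$ lies in $\HH_f$ and on applications of $E_iE_{i\pm1}E_i = E_i$, is needed to ensure that the Hecke-algebra reduction modulo $I_{f+1}$ really is as claimed.
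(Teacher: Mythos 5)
Your overall strategy --- expand $x$ in the basis of Lemma~\ref{lemma:basis}, stratify by the number $f$ of horizontal edges, and exploit $xT_i=qx$, $xE_i=0$ layer by layer along the filtration --- is a reasonable outline, but the reduction you build it on is false, and the failure is not confined to the ``hard part'' you flag at the end. The claim that, modulo $I_{f+1}$, right multiplication of $T_uT_\sigma\widehat{E_f}T_d$ by $T_i$ ``follows the pure Hecke rule'' breaks in two ways. First, the deviations from the Hecke rule need not land in $I_{f+1}$: the relations $E_jT_j=r^{-1}E_j$, $E_j^2=\delta E_j$ and $E_jE_{j\pm1}E_j=E_j$ contract the offending terms back to tangles with exactly $2f$ horizontal edges, so they contribute at level $f$ itself. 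Already for $n=2$, $f=1$ one has $E_1T_1=r^{-1}E_1$: the unique level-one basis element is an $r^{-1}$-eigenvector, not a $q$-eigenvector, for right multiplication by $T_1$, so your asserted layerwise eigenvector equation is simply not satisfied by the true symmetrizer. Second, even at the top layer the equation you extract is not homogeneous: the lower layers $x^{(g)}$, $g<f$, produce terms landing at level $f$ (for $n=2$, $\widehat{\Sym_2}T_1=q\widehat{\Sym_2}-(q^2-1)r^{-1}E_1$), so what the identity $xT_i=qx$ gives you in $I_f/I_{f+1}$ is an inhomogeneous relation mixing consecutive layers, not $\bigl(\sum_d a_dT_d\bigr)T_i=q\bigl(\sum_d a_dT_d\bigr)$. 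Note also the internal inconsistency: if the action really were the regular Hecke action, the unique solution of that eigenvector equation would be $\widehat{\Sym_n}$, not $\widehat{\HH_f}$ (a sum over coset representatives is never a $q$-eigenvector for the regular action); the appearance of $\widehat{\HH_f}$ in $x_f$ is precisely a consequence of the non-Hecke scalars by which the stabilizer of the edge configuration acts, which is the phenomenon your reduction assumes away.

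For comparison, the paper does not prove the lemma top-down at all: it quotes it from \cite{huxiao} and observes afterwards (the Remark following Corollary~\ref{cor:actionE_iT_i}) that an independent proof drops out of the main computation --- one exhibits the explicit candidate $\sum_fc_fx_f$ with the ratios of Theorem~\ref{theorem:actionE_1} and verifies directly that it satisfies $xE_i=0$ and $xT_i=qx$, whence it is the symmetrizer by uniqueness (up to scalar) of the quasi-idempotent affording $\rho_1$. If you want to keep your top-down route, you would have to treat $I_f/I_{f+1}$ as an induced (cell) module on which the stabilizer $(\Sym_2\wr\Sym_f)\times\Sym_{n-2f}$ of the standard edge configuration acts through explicit scalars, identify the $q$-eigenspace there, and then solve the resulting inhomogeneous recursion linking layer $f$ to layer $f-1$; that analysis is essentially the content of the diagram calculations in Section~3 of the paper and cannot be waved through by the quadratic relation alone.
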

\begin{remark}\label{remark:multiplicative}
If $S,Y,Z\subseteq\Sym_n$, $S=Y\cdot Z$ and if each element $w\in S$ can be
uniquely written
as $w=yz$ with $y\in Y,z\in Z$ and then $l(w)=l(y)+l(z)$, then
clearly $\widehat{S}=\widehat{Y}\widehat{Z}$.
\end{remark}
 In particular, we have

\begin{lemma}\label{lemma:coset_representatives}
  \begin{enumerate}
  \item
    If $\mu\vDash n$, then
    $\widehat{\Sym_n}=\widehat{\Sym_\mu}\widehat{\DD_\mu}$.
  \item If $\lambda,\mu\vDash n$ such that $\Sym_\lambda\subseteq\Sym_\mu$,
    then
    $\widehat{\DD_\lambda}=\widehat{\Sym_\mu\cap\DD_\lambda}\widehat{\DD_\mu}$.
    Hence, if $\mu=(\mu_1,\ldots,\mu_k)$ and
    $\lambda=(\nu^1,\nu^2,\ldots,\nu^k)$ where $\nu^i$ is
    a composition of $\mu_i$, then we have
    $\widehat{\DD_\lambda}=\widehat{\iota_1(\DD_{\nu^1})}
    \widehat{\iota_2(\DD_{\nu^2})}\cdots
    \widehat{\iota_k(\DD_{\nu^k})}\widehat{\DD_\mu}$
    where $\iota_i$ is the obvious embedding of $\Sym_{\mu_i}$ into $\Sym_n$.

  \end{enumerate}
\end{lemma}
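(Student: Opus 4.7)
The plan is to derive both parts from Remark \ref{remark:multiplicative} by exhibiting, in each case, a set equality $S=Y\cdot Z$ in which every element has a unique factorization with additive lengths. All length-additivity assertions I will invoke are standard facts about distinguished coset representatives in a Coxeter group (in particular in $\Sym_n$), and I will simply verify the shape of the decomposition each time.

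For part~(1), by the definition of $\DD_\mu$ every $w\in\Sym_n$ can be written uniquely as $w=v\,d$ with $v\in\Sym_\mu$ and $d\in\DD_\mu$, and since $\DD_\mu$ consists of the minimal length coset representatives one has $\ell(w)=\ell(v)+\ell(d)$. Remark~\ref{remark:multiplicative} applied to $S=\Sym_n$, $Y=\Sym_\mu$, $Z=\DD_\mu$ then yields $\widehat{\Sym_n}=\widehat{\Sym_\mu}\widehat{\DD_\mu}$. For the first claim of part~(2), I would show that $\DD_\lambda^{\mu}:=\Sym_\mu\cap\DD_\lambda$ is exactly the set of distinguished right coset representatives of $\Sym_\lambda$ in $\Sym_\mu$: membership in $\Sym_\mu$ is the defining extra condition, and minimality of length in each $\Sym_\lambda$-coset in $\Sym_\mu$ follows from minimality in the corresponding coset in $\Sym_n$. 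Then for $d\in\DD_\lambda$ I factor the image $\bar d$ of $d$ in $\Sym_\lambda\backslash\Sym_n$ through $\Sym_\lambda\backslash\Sym_\mu$: writing $d=d'\,d''$ with $d'\in\DD_\lambda^\mu$ and $d''\in\DD_\mu$ gives a unique factorization, and the combined minimality forces $\ell(d)=\ell(d')+\ell(d'')$. Remark~\ref{remark:multiplicative} then yields $\widehat{\DD_\lambda}=\widehat{\Sym_\mu\cap\DD_\lambda}\,\widehat{\DD_\mu}$.

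For the second (iterated) claim of part~(2), I would observe that when $\lambda=(\nu^1,\ldots,\nu^k)$ is the concatenation of compositions $\nu^i\vDash\mu_i$, then $\Sym_\lambda=\iota_1(\Sym_{\nu^1})\times\cdots\times\iota_k(\Sym_{\nu^k})$ sits inside $\Sym_\mu=\iota_1(\Sym_{\mu_1})\times\cdots\times\iota_k(\Sym_{\mu_k})$ blockwise. It follows that $\Sym_\mu\cap\DD_\lambda=\iota_1(\DD_{\nu^1})\cdot\iota_2(\DD_{\nu^2})\cdots\iota_k(\DD_{\nu^k})$, with the factorization being unique since the factors live in pairwise commuting Young subgroups acting on disjoint intervals; lengths add for the same reason. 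Applying Remark~\ref{remark:multiplicative} once more (inductively in $k$) and then combining with the first statement of part~(2) gives the asserted product expression for $\widehat{\DD_\lambda}$.

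The one point that needs a little care (and is the only real obstacle) is the length-additivity in the nested step $d=d'd''$: one has to argue that no cancellation can occur when multiplying a minimal representative of $\Sym_\lambda\backslash\Sym_\mu$ with a minimal representative of $\Sym_\mu\backslash\Sym_n$. I would handle this by the standard characterization of $\DD_\nu$ via right descents (i.e.\ $d\in\DD_\nu$ iff $s_id$ is longer than $d$ for every simple reflection $s_i\in\Sym_\nu$) and then check that the descents of $d''$ lie outside $\Sym_\mu$ while those needed to reduce $d'd''$ lie inside $\Sym_\mu$, so the two contributions are disjoint. Everything else is bookkeeping.
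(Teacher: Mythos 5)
Your proposal is correct and matches the paper's treatment: the paper states this lemma with no written proof, presenting it as an immediate consequence of Remark~\ref{remark:multiplicative} together with the standard facts about distinguished coset representatives (existence and uniqueness of the length-additive factorizations $w=vd$ and $d=d'd''$, and the blockwise description of $\Sym_\mu\cap\DD_\lambda$), which is exactly what you spell out. Your extra care about length-additivity in the nested step is a correct and welcome elaboration, but it is the same route.
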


\begin{remark}\label{remark:coset_representatives}
  Applying the antiautomorphism of $B_n$, which maps $T_i$ to $T_i$
  and $E_i$ to $E_i$ to Lemma~\ref{lemma:coset_representatives} shows
  that $\widehat{\Sym_n}=\widehat{\DD_\mu^{-1}}\widehat{\Sym_\mu}$,
  etc.  All diagram identities which will be shown in the next lemmas produce
  new identities by applying this antiautomorphism.
\end{remark}
One may write the results of Lemma~\ref{lemma:coset_representatives}
in terms of diagrams with the following convention:
If a box filled with $\widehat{S}$ occurs in a diagram, we mean the
sum over all $w\in S$, the summands being $q^{l(w)}$ times the diagram
with the box replaced by $T_w$.

For example, the equation  $E_1(1+qT_1)=(1+qr^{-1})E_1$  can be
depicted as
\[
\htangle{88}{2cm}=
\htangle{89}{2cm}
+q  \htangle{90}{2cm}=
(1+qr^{-1})\htangle{89}{2cm}
\]
Note that the upper horizontal edge is not involved in these
calculations, so we can omit it.

\begin{equation}\label{equation:E_1S_2}
  \htangle{1}{1.5cm}=
  (1+qr^{-1})\htangle{2}{1.5cm}
\end{equation}
This equation can be  embedded into a larger piece of tangle, such
that  new tangles arise, the equation holds for the new tangles:
\[
\htangle{91}{3cm}=
(1+qr^{-1})\htangle{92}{3cm}
\]
The assertions of
Lemma~\ref{lemma:coset_representatives} are:

\begin{align}
  \wtangle{4}{3cm}&=
  \wtangle{6}{3cm}\label{equation:Sym}\\
  \wtangle{5}{4cm}&=
  \wtangle{7}{4cm}\label{equation:D_mu}
\end{align}
if $\mu=(\mu_1,\ldots,\mu_k)$,
$\lambda=(\nu^1,\nu^2,\ldots,\nu^k)$ and  $\nu^i$ is
a composition of $\mu_i$.

To determine the coefficients $c_f$, we need several technical
lemmas.

\begin{lemma}\label{lemma:ESym_n}
  Suppose $n\geq 3$. Then
  \begin{equation}\label{equation:ESym_n}
    \wtangle{10}{3cm}=
    \wtangle{11}{3cm}
  \end{equation}
\end{lemma}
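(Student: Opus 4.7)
The plan is to prove this local identity by decomposing the $\widehat{\Sym_n}$-box in the left-hand diagram into pieces that interact cleanly with the $E_1$-strand at the top, and then recollecting the result.

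First I would apply equation \eqref{equation:Sym} (i.e.\ the diagrammatic form of Lemma \ref{lemma:coset_representatives}(1)) with $\mu=(2,n-2)$ to factor the symmetric-group box as
\[
  \widehat{\Sym_n}=\widehat{\Sym_{\{1,2\}}}\,\widehat{\Sym_{\{3,\ldots,n\}}}\,\widehat{\DD_{(2,n-2)}}.
\]
In pictures this splits the big box into three stacked boxes: a small $\widehat{\Sym_{\{1,2\}}}$-box on strands $1,2$, an $\widehat{\Sym_{\{3,\ldots,n\}}}$-box on strands $3,\ldots,n$ (these two sitting side by side), and a $\widehat{\DD_{(2,n-2)}}$-box below.

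Next I would slide things past the horizontal edge at the top. By Theorem \ref{theorem:isomorphism} the generators $T_3,\ldots,T_{n-1}$ (which are all that occurs in $\widehat{\Sym_{\{3,\ldots,n\}}}$) commute with $E_1$, so the $\widehat{\Sym_{\{3,\ldots,n\}}}$-box passes freely through the $E_1$-strand. Then I would apply equation \eqref{equation:E_1S_2} to absorb the $\widehat{\Sym_{\{1,2\}}}$-box into the $E_1$-strand and pick up the scalar $(1+qr^{-1})$. After this the piece above the $\widehat{\DD_{(2,n-2)}}$-box is exactly $E_1$ (on strands $1,2$) next to a $\widehat{\Sym_{\{3,\ldots,n\}}}$-box (on strands $3,\ldots,n$), which is the shape expected on the right-hand side. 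The hypothesis $n\geq 3$ is used precisely to ensure there are actually strands in the set $\{3,\ldots,n\}$ on which $\widehat{\Sym_{\{3,\ldots,n\}}}$ acts, so that the diagrammatic splitting is legitimate.

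The main obstacle I anticipate is not the algebra itself, which is a routine combination of a coset factorisation with the two facts $E_1 T_i=T_i E_1$ for $i\geq 3$ and $E_1\widehat{\Sym_{\{1,2\}}}=(1+qr^{-1})E_1$, but rather the bookkeeping of matching the diagrammatic layout of box \#10 with that of box \#11. In particular one has to check that after pulling the $\widehat{\Sym_{\{3,\ldots,n\}}}$-box through $E_1$ and combining it back with $\widehat{\DD_{(2,n-2)}}$ via Remark \ref{remark:multiplicative}, the resulting product has exactly the form drawn on the right-hand side (i.e.\ no stray crossings or reordering of strands remain unaccounted for). Once this visual match is verified, the identity follows.
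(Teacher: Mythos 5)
Your manipulation of the left-hand side is correct as far as it goes: factoring $\widehat{\Sym_n}=\widehat{\Sym_{\{1,2\}}}\widehat{\Sym_{\{3,\ldots,n\}}}\widehat{\DD_{(2,n-2)}}$ via \eqref{equation:Sym}, commuting $\widehat{\Sym_{\{3,\ldots,n\}}}$ past the horizontal edge, and absorbing $\widehat{\Sym_{\{1,2\}}}$ by \eqref{equation:E_1S_2} yields the true identity that the edge at position $(1,2)$ composed with $\widehat{\Sym_n}$ equals $(1+qr^{-1})$ times the edge composed with $\widehat{\Sym_{\{3,\ldots,n\}}}\widehat{\DD_{(2,n-2)}}$ (this is essentially the $f=1$ flavour of Proposition~\ref{prop:e_fH_f}). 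But that is not what Lemma~\ref{lemma:ESym_n} asserts. The right-hand side of \eqref{equation:ESym_n} is not this normal form: it is the same kind of diagram as the left-hand side, namely a single horizontal edge placed at the \emph{adjacent} position on top of the full $\widehat{\Sym_n}$-box, with no scalar factor. Two things should have signalled this: the stated equation carries no factor $(1+qr^{-1})$, and the paper's $n=3$ base case expands \emph{both} sides over all six elements of $\Sym_3$ before showing they collapse to the same expression. Your argument never touches the right-hand side at all, and your chosen decomposition cannot handle it, because the shifted edge at $(2,3)$ straddles the boundary between the blocks $\{1,2\}$ and $\{3,\ldots,n\}$: $E_2$ commutes with neither $\widehat{\Sym_{\{1,2\}}}$ nor $\widehat{\Sym_{\{3,\ldots,n\}}}$, so neither the commuting step nor the absorption step applies there.

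The genuinely nontrivial content of the lemma is exactly the comparison of the two edge positions, and some computation with the BMW/tangle relations is unavoidable. The paper does it by brute force for $n=3$ (expanding each side into six capped permutation tangles and collapsing pairs using the curl relation that removes a kink at the cost of $r^{\pm1}$), and then reduces general $n$ to $n=3$ by extracting a $\widehat{\Sym_3}$-block covering the three strands involved in the shift. If you wish to salvage your strategy, you would need a second normal form for the edge at $(2,3)$ (using a different parabolic, say $(1,2,1^{n-3})$) and then a proof that the two normal forms agree — which is precisely the step your proposal omits.
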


\begin{proof}
  For $n=3$ the lemma can be computed directly:
  \begin{eqnarray*}
    \htangle{8}{1.5cm}&=&
    \htangle{12}{1.5cm}
    +q\htangle{13}{1.5cm}
    +q\htangle{14}{1.5cm}
    +q^2\htangle{15}{1.5cm}
    +q^2\htangle{16}{1.5cm}
    +q^3\htangle{17}{1.5cm}\\
    &=&(1+qr^{-1})
    \left(
      q^2\htangle{24}{1cm}
      +q\htangle{26}{1cm}
      +\htangle{25}{1cm}
    \right)
  \end{eqnarray*}
  On the other hand,
  \begin{eqnarray*}
    \htangle{9}{1.5cm}&=&
    \htangle{18}{1.5cm}
    +q\htangle{19}{1.5cm}
    +q\htangle{20}{1.5cm}
    +q^2\htangle{21}{1.5cm}
    +q^2\htangle{22}{1.5cm}
    +q^3\htangle{23}{1.5cm} \\
    &=&(1+qr^{-1})
    \left(
      q^2\htangle{24}{1cm}
      +q\htangle{26}{1cm}
      +\htangle{25}{1cm}
    \right)
  \end{eqnarray*}
  This shows the lemma for $n=3$. For the general case, note that it
  suffices to shift the horizontal edge on top of $\widehat{\Sym_n}$ by
  one to the left (or right). The result follows from
  equation~\eqref{equation:Sym} for $\lambda=(1^k,2,1^{n-k-2})$ and the
  case $n=3$.
\end{proof}

\begin{lemma}\label{lemma:SymDD}
  For $n\geq 2$,we have
  \[
  \widehat{\Sym_n}\cdot\widehat{\DD_{(1,n-1)}}
  =\left(\sum_{i=0}^{n-1}q^{2i}\right)\widehat{\Sym_n}
  +\frac{1-q^2}{q^{-1}r+1}\widehat{\Sym_n}E_1\widehat{\DD_{(2,n-2)}}.
  \]
\end{lemma}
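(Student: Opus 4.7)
The plan is to proceed by induction on $n$, with the base case $n=2$ being a direct calculation: $\widehat{\DD_{(1,1)}} = 1+qT_1 = \widehat{\Sym_2}$ and $\widehat{\DD_{(2,0)}}=1$, so using the quadratic relation $T_1^2 = 1 + (q-q^{-1})(T_1 - r^{-1}E_1)$ together with $(1+qT_1)E_1 = (1+qr^{-1})E_1$ one checks that
$(1+qT_1)^2 = (1+q^2)\widehat{\Sym_2} + \frac{q(1-q^2)}{r} E_1$,
which agrees with the claimed formula since $\frac{1-q^2}{q^{-1}r+1}\widehat{\Sym_2}E_1 = \frac{1-q^2}{q^{-1}r+1}(1+qr^{-1})E_1 = \frac{q(1-q^2)}{r}E_1$.

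The key auxiliary identity, which I would establish first, is the \emph{Hecke-like relation}: for each $1 \le i \le n-1$,
\[
\widehat{\Sym_n}\,T_i \;=\; q\,\widehat{\Sym_n} \;+\; \tfrac{1-q^2}{r+q}\,\widehat{\Sym_n}\,E_i.
\]
This comes from the factorization $\widehat{\Sym_n} = \widehat{V^{(i)}}(1+qT_i)$, where $V^{(i)}=\{w:\ell(ws_i)>\ell(w)\}$ (Lemma~\ref{lemma:coset_representatives}(1) for $\mu=(1^{i-1},2,1^{n-i-1})$ on the right). Substituting the quadratic relation and using $(1+qT_i)E_i = (1+qr^{-1})E_i$ to write $\widehat{V^{(i)}}E_i = (1+qr^{-1})^{-1}\widehat{\Sym_n}E_i$ produces the stated coefficient $\frac{(1-q^2)r^{-1}}{1+qr^{-1}} = \frac{1-q^2}{r+q}$.

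For the inductive step I would use the factorization
\[
\widehat{\DD_{(1,n-1)}} \;=\; 1 + q\,\widetilde{D}\,T_1, \qquad \widetilde{D} = 1 + qT_2 + q^2T_3T_2 + \cdots + q^{n-2}T_{n-1}\cdots T_2,
\]
together with Remark~\ref{remark:coset_representatives}, which gives $\widehat{\Sym_n} = \widehat{\DD_{(1,n-1)}^{-1}}\,\widehat{\Sym_{\{2,\ldots,n\}}}$. Applying the inductive hypothesis inside the parabolic subalgebra on positions $\{2,\ldots,n\}$ (where $\widetilde{D}$ is the shifted copy of $\widehat{\DD_{(1,n-2)}}$) yields an expression for $\widehat{\Sym_{\{2,\ldots,n\}}}\widetilde{D}$, and prepending $\widehat{\DD_{(1,n-1)}^{-1}}$ gives
\[
\widehat{\Sym_n}\,\widetilde{D} \;=\; [n-1]_{q^2}\widehat{\Sym_n} + \tfrac{1-q^2}{q^{-1}r+1}\widehat{\Sym_n}\,E_2\,\widehat{\DD'_{(2,n-3)}},
\]
where the prime denotes the shifted set inside $\Sym_{\{2,\ldots,n\}}$. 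Multiplying on the right by $T_1$, absorbing the $\widehat{\Sym_n}T_1$ piece via the Hecke-like relation, and using $1 + q^2[n-1]_{q^2} = [n]_{q^2}$ together with $\frac{1-q^2}{r+q}=\frac{1}{q}\cdot\frac{1-q^2}{q^{-1}r+1}$ reassembles everything into
\[
\widehat{\Sym_n}\widehat{\DD_{(1,n-1)}} = [n]_{q^2}\widehat{\Sym_n} + \tfrac{1-q^2}{q^{-1}r+1}\bigl([n-1]_{q^2}\widehat{\Sym_n}E_1 + q\widehat{\Sym_n}E_2\widehat{\DD'_{(2,n-3)}}T_1\bigr).
\]

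The main obstacle is the last step: identifying the parenthesized expression with $\widehat{\Sym_n}E_1\widehat{\DD_{(2,n-2)}}$. I would handle this by decomposing $\DD_{(2,n-2)}$ according to whether $(1)d=1$ or $(1)d\geq 2$. The elements with $(1)d=1$ are exactly the images of the $n-1$ elements $\DD'_{(1,n-2)}\subset \Sym_{\{2,\ldots,n\}}$ (sending $2\to b$ for $b=2,\ldots,n$ order-preservingly), contributing $[n-1]_{q^2}\widehat{\Sym_n}E_1$ after using Lemma~\ref{lemma:ESym_n} to move the $E$-edge and collapse the sum into a scalar. The elements with $(1)d\geq 2$ factor (with length additivity) as $\sigma_{1,b}\cdot (s_1s_2\cdots s_{a-1})$, producing the $T_1$ factor on the right and contributing the $q\widehat{\Sym_n}E_2\widehat{\DD'_{(2,n-3)}}T_1$ term (after invoking Lemma~\ref{lemma:ESym_n} to exchange $E_2$ and $E_1$ inside the relevant sandwich). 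Matching the two decompositions term by term, using the local relation $T_1T_2E_1 = E_2E_1$ and its variants to reconcile the various $E_i T_{\cdot}$ products, completes the argument.
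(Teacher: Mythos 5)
Your base case and your ``Hecke-like relation'' $\widehat{\Sym_n}T_i=q\widehat{\Sym_n}+\tfrac{1-q^2}{r+q}\widehat{\Sym_n}E_i$ are both correct (the latter is exactly the paper's key step, derived the same way), and an induction on $n$ is a viable alternative to the paper's induction on the length of the coset representative inside a fixed $\Sym_n$. However, two steps of your inductive argument are wrong as written. First, since $\DD_{(1,n-1)}=\{s_1s_2\cdots s_{i-1}\mid i=1,\ldots,n\}$, your factorization $\widehat{\DD_{(1,n-1)}}=1+q\widetilde{D}T_1$ with $\widetilde{D}=1+qT_2+q^2T_3T_2+\cdots$ is actually a factorization of $\sum_{d\in\DD_{(1,n-1)}}q^{\ell(d)}T_{d^{-1}}$, not of $\widehat{\DD_{(1,n-1)}}$; likewise your $\widetilde D$ is the shifted copy of $\widehat{\DD_{(1,n-2)}^{-1}}$, not of $\widehat{\DD_{(1,n-2)}}$, so the induction hypothesis does not apply to $\widehat{\Sym_{\{2,\ldots,n\}}}\widetilde D$. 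The correct factorization is $\widehat{\DD_{(1,n-1)}}=1+qT_1\widehat{\DD_{(1,n-2)}^{\{2,\ldots,n\}}}$ with $T_1$ on the \emph{left} and $\widehat{\DD_{(1,n-2)}^{\{2,\ldots,n\}}}=1+qT_2+q^2T_2T_3+\cdots$; the spurious $T_1$ on the right of your $E_2$-term is an artifact of this error.

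Second, and more seriously, the claimed collapse of $\widehat{\Sym_n}E_1\widehat{\DD_{(1,n-2)}^{\{2,\ldots,n\}}}$ to the scalar multiple $\bigl(\sum_{k=0}^{n-2}q^{2k}\bigr)\widehat{\Sym_n}E_1$ is false. Lemma~\ref{lemma:ESym_n} (and its reflected form) only moves the horizontal edge \emph{adjacent to} $\widehat{\Sym_n}$, i.e.\ the top edge of $E_1$; the right factors $T_{s_2\cdots s_{i-1}}$ move the \emph{bottom} edge of $E_1$ to distinct positions, producing elements lying over distinct Brauer diagrams, which are linearly independent. (If such a collapse were available, the term $\widehat{\Sym_n}E_1\widehat{\DD_{(2,n-2)}}$ in the statement itself would reduce to a scalar multiple of $\widehat{\Sym_n}E_1$, which it does not.) Accordingly, the target identity of your last step, $\widehat{\Sym_n}E_1\widehat{\DD_{(2,n-2)}}=[n-1]_{q^2}\widehat{\Sym_n}E_1+q\widehat{\Sym_n}E_2\widehat{\DD_{(2,n-3)}^{\{2,\ldots,n\}}}T_1$, already fails for $n=3$. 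The argument can be repaired: with the corrected factorization the inductive step yields $[n]_{q^2}\widehat{\Sym_n}+\tfrac{1-q^2}{q^{-1}r+1}\bigl(\widehat{\Sym_n}E_1\widehat{\DD_{(1,n-2)}^{\{2,\ldots,n\}}}+q^2\widehat{\Sym_n}E_2\widehat{\DD_{(2,n-3)}^{\{2,\ldots,n\}}}\bigr)$, and the parenthesis equals $\widehat{\Sym_n}E_1\widehat{\DD_{(2,n-2)}}$ because $\DD_{(2,n-2)}$ is the disjoint union of $\DD_{(1,n-2)}^{\{2,\ldots,n\}}$ and $s_2s_1\cdot\DD_{(2,n-3)}^{\{2,\ldots,n\}}$ with lengths adding, while $\widehat{\Sym_n}E_1T_2T_1=\widehat{\Sym_n}E_1E_2=\widehat{\Sym_n}E_2$ by the reflected Lemma~\ref{lemma:ESym_n}. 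With these corrections your induction on $n$ goes through and gives a genuinely different (though closely related) route to the paper's proof.
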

\begin{proof}
  First, note that $\DD_{(1,n-1)}=\{s_1s_2\cdots s_{i-1}\mid
  i=1,\ldots,n\}$ and $\DD_{(2,n-2)}=\{s_2s_3\cdots
  s_{i-1}s_1s_2\cdots s_{j-1}\mid 1\leq j<i\leq n\}$. We first show that
  \begin{equation}\label{equation:SymT1T2cdots}
    \widehat{\Sym_n}T_1T_2\cdots T_{i-1}=q^{i-1} \widehat{\Sym_n}
    +\frac{1-q^2}{r+q}
    \widehat{\Sym_n}E_1\sum_{j=1}^{i-1}q^{j-1}T_2T_3\cdots
    T_{i-1}T_1T_2\cdots T_{j-1}
  \end{equation}
  for $i=1,\ldots,n$.
  We argue by induction on $i$.
  Equation~\eqref{equation:SymT1T2cdots} is trivial for
  $i=1$. Suppose, equation~\eqref{equation:SymT1T2cdots} holds for
  $i$.
  We have
  \begin{align*}
    \widehat{\Sym_n}&T_1T_2\cdots
    T_{i}=(\widehat{\Sym_n}T_1T_2\cdots T_{i-1})T_{i}\\
    &=
    q^{i-1} \widehat{\Sym_n}T_{i}
    +\frac{1-q^2}{r+q}
    \widehat{\Sym_n}E_1\sum_{j=1}^{i-1}q^{j-1}T_2T_3\cdots
    T_{i-1}T_1T_2\cdots T_{j-1}T_{i}
  \end{align*}
  Note that $T_{i}$ commutes with $T_1T_2\cdots T_{j-1}$, thus the
  second part of the right hand side is exactly the sum over
  $j=1,\ldots,i-1$ in equation~\eqref{equation:SymT1T2cdots} for $i+1$
  instead of $i$.

To simplify the notations, we shall often use $a^k$ to represents the $k$-tuple $\underbrace{a,a,\dots,a}_{\text{$k$ copies}}$.
  Remark~\ref{remark:coset_representatives} for $\lambda=(1^{i-1},2,1^{n-i-1})$
  shows that $ \widehat{\Sym_n}= \widehat{\DD_\lambda^{-1}}(1+qT_i)$.
  It can be verified by direct calculation that
 \begin{eqnarray*}
    (1+qT_i)T_i
    &=&(1+qT_i)\cdot\left(q+\frac{1-q^2}{r+q}E_i\right).
  \end{eqnarray*}
 Since $T_xT_y=T_{xy}$ whenever $\ell(xy)=\ell(x)+\ell(y)$, it is easy to see that $(1+qT_i)$ is a right factor of $\widehat{\Sym_n}$. Therefore,
we obtain
  \begin{align*}
    q^{i-1} \widehat{\Sym_n}T_{i}&=   q^{i-1}
    \widehat{\Sym_n}\left(q+\frac{1-q^2}{r+q}E_i\right)
    =q^i\widehat{\Sym_n}+\frac{1-q^2}{r+q} q^{i-1}
    \widehat{\Sym_n}E_i
  \end{align*}
  $ q^i\widehat{\Sym_n}$ is the summand $q^{i-1}\widehat{\Sym_n}$ in
  equation~\eqref{equation:SymT1T2cdots} with $i$ replaced by $i+1$.
  Apply Remark~\ref{remark:coset_representatives} to
  equation~\eqref{equation:ESym_n} to obtain
\begin{align*}
  \widehat{\Sym_n}E_i&=\wtangle{28}{3cm}
  =\wtangle{29}{3cm}=\wtangle{30}{3cm}\\
  &= \widehat{\Sym_n}E_1T_2T_3\cdots T_{i}T_1T_2\cdots T_{i-1}
 \end{align*}
 Thus, $\frac{1-q^2}{r+q} q^{i-1}\widehat{\Sym_n}E_i$ is the summand
 for $j=i$ in
  equation~\eqref{equation:SymT1T2cdots} with $i$ replaced by $i+1$.
  This shows equation~\eqref{equation:SymT1T2cdots}. 
  The result follows by summing $q^{i-1}$ times the term in
  equation~\eqref{equation:SymT1T2cdots}, since
  \[\sum_{1\leq j<i\leq n}q^{j-1+i-1}T_2T_3\cdots
  T_{i-1}T_1T_2\cdots T_{j-1}=q\widehat{\DD_{(2,n-2)}}\]

\end{proof}
The diagram version of Lemma~\ref{lemma:SymDD} is
\begin{equation}\label{equation:SymDD}
  \wtangle{31}{3cm}
  =\left(\sum_{i=0}^{n-1}q^{2i}\right)
  \wtangle{4}{3cm}
  +\frac{1-q^2}{q^{-1}r+1}
  \wtangle{32}{3cm}
\end{equation}

Next, we will show several identities involving $\HH_f$. Note, that
$\HH_f=\HH_f(n)$ not only depends on $f$, but also on $n$. If
$j-i\geq 2f$, we write
$\HH_f^{\{i+1,\ldots,j\}}$ for the image of  $\HH_f(j-i)$ under the
embedding $T_k\mapsto T_{i+k}$, $E_k\mapsto E_{i+k}$.
We will use a similar notation for $\DD_\mu$.

If
$\widehat{\HH_f}$ occurs in a diagram, then $i$ and $j$ might be obtained
from the diagram: $i$ is the number of vertical edges on the left of
the box filled with $\widehat{\HH_f}$ and $j-i$ is the number of edges
going into the box. Thus, in the diagrams we simply write $\widehat{\HH_f(n)}$
or simply $\widehat{\HH_f}$ if $n$ is clear from the context.


\begin{lemma}\label{lemma:H_fn}
  We have
  \[
  \widehat{\HH_{f}(n)}=
  \widehat{\HH_{f}(2f)}
  \widehat{\DD_{(2f,n-2f)}}
  \]
In diagrams,
\begin{equation}\label{equation:H_fn}
  \wtangle{33}{4cm}
  =\wtangle{34}{4cm}
\end{equation}
\end{lemma}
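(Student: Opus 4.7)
The plan is to check that every element $d\in\HH_f(n)$ factors uniquely as $d=\sigma d'$ with $\sigma\in\HH_f(2f)\subseteq\Sym_{\{1,\ldots,2f\}}$ and $d'\in\DD_{(2f,n-2f)}$, and that the lengths add. Then Remark~\ref{remark:multiplicative} gives the desired identity.

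For the bijection, given $d\in\HH_f(n)$ consider the (unordered) set $S=\{(1)d,(2)d,\ldots,(2f)d\}$. Define $d'\in\Sym_n$ by sending $1,2,\ldots,2f$ to the elements of $S$ in increasing order and $2f+1,\ldots,n$ to the elements of $\{1,\ldots,n\}\setminus S$ in increasing order; then $d'\in\DD_{(2f,n-2f)}$. Set $\sigma=d(d')^{-1}$. Since $d$ and $d'$ agree on $\{2f+1,\ldots,n\}$ (both are order-preserving bijections onto $\{1,\ldots,n\}\setminus S$), $\sigma$ fixes every $i>2f$, so $\sigma\in\Sym_{\{1,\ldots,2f\}}$. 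The conditions $(2i-1)d<(2i)d$ and $(1)d<(3)d<\cdots<(2f-1)d$ together with the fact that $d'$ restricted to $\{1,\ldots,2f\}$ is order-preserving onto $S$ with inverse also order-preserving transfer verbatim to $\sigma$, hence $\sigma\in\HH_f(2f)$. Conversely, for any $\sigma\in\HH_f(2f)$ and $d'\in\DD_{(2f,n-2f)}$ one checks directly from the definitions that $\sigma d'\in\HH_f(n)$; and the factorization is unique because $S$ is recovered from $d$ as $\{(1)d,\ldots,(2f)d\}$, which then determines $d'$ and hence $\sigma$.

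For length additivity, observe that $\HH_f(2f)\subseteq \Sym_{\{1,\ldots,2f\}}\subseteq\Sym_{(2f,n-2f)}$, and $\DD_{(2f,n-2f)}$ is by definition the set of distinguished right coset representatives of $\Sym_{(2f,n-2f)}$ in $\Sym_n$; hence for any $x\in\Sym_{(2f,n-2f)}$ and $d'\in\DD_{(2f,n-2f)}$ one has $\ell(xd')=\ell(x)+\ell(d')$. Applied to $x=\sigma$ this gives $\ell(\sigma d')=\ell(\sigma)+\ell(d')$, so Remark~\ref{remark:multiplicative} with $S=\HH_f(n)$, $Y=\HH_f(2f)$ and $Z=\DD_{(2f,n-2f)}$ yields
\[
\widehat{\HH_f(n)}=\widehat{\HH_f(2f)}\,\widehat{\DD_{(2f,n-2f)}}.
\]

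The only genuinely delicate point is the uniqueness and well-definedness of the factorization; once the combinatorial parametrization of $\HH_f(n)$ by the pair (subset $S$, pairing of $\{1,\ldots,2f\}$) is made explicit, everything else is immediate from the standard theory of distinguished coset representatives. I do not expect to need any diagrammatic arguments for this lemma, since it lives entirely inside $\Sym_n$.
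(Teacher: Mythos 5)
Your proof is correct and follows essentially the same route as the paper: both arguments rest on the distinguished-coset-representative structure of $\DD_{(2f,n-2f)}$, length additivity, and Remark~\ref{remark:multiplicative}. The only difference is that the paper verifies just the inclusion $\HH_{f}(2f)\DD_{(2f,n-2f)}\subseteq\HH_{f}(n)$ and concludes by comparing cardinalities (both sets have $\frac{n!}{2^f f!\,(n-2f)!}$ elements), whereas you construct the factorization $d=\sigma d'$ explicitly in both directions.
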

\begin{proof}
  Since $\DD_{(2f,n-2f)}$ is a set of distinguished right coset
  representatives of $\Sym_{(2f,n-2f)}$ in $\Sym_n$ and $\HH_{f}(2f)\subseteq
  \Sym_{(2f,n-2f)}$,
  each element of
  $\HH_{f}(2f)\DD_{(2f,n-2f)}$ can be
  uniquely written as a product of elements of these sets, and lengths
  add. Thus
  $ \widehat{\HH_{f}(2f)}
  \widehat{\DD_{(2f,n-2f)}}=
  \widehat{\HH_{f}(2f)\DD_{(2f,n-2f)}}$
  and it suffices to show that
  $\HH_{f}(2f)\DD_{(2f,n-2f)}=\HH_{f}(n)$.
  Both sets have the same cardinality,
  namely $\frac{n!}{2^ff!(n-2f)!}$
  and it is easy to see that
  $\HH_{f}(2f)\DD_{(2f,n-2f)}\subseteq\HH_{f}(n)$
  by verifying the defining conditions of $\HH_f$.
\end{proof}
Lemma~\ref{lemma:H_fn} shows that $\HH_f(n)$ can be deduced from
$\HH_f(2f)$. The following lemmas deal with the case $n=2f$.
\begin{lemma}\label{lemma:H_f(2f)}
  We have
  \[
  \widehat{\HH_{f}(2f)}
  =\widehat{\HH_{f-1}^{\{3,\ldots,2f\}}}\widehat{\DD_{(1,2f-2)}^{\{2,\ldots,2f\}}}
  \]
  or in diagrams
  \begin{equation}
    \wtangle{35}{4cm}=
    \wtangle{36}{4cm}
    \label{equation:H_f-1}
  \end{equation}
\end{lemma}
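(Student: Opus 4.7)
My plan is to mimic the proof of Lemma~\ref{lemma:H_fn} and deduce the identity from Remark~\ref{remark:multiplicative}. The heart of the matter is to verify that the multiplication map
$$
\HH_{f-1}^{\{3,\ldots,2f\}}\times\DD_{(1,2f-2)}^{\{2,\ldots,2f\}}\longrightarrow \HH_f(2f),\qquad (\sigma,\tau)\mapsto \sigma\tau,
$$
is a bijection with $\ell(\sigma\tau)=\ell(\sigma)+\ell(\tau)$ for every pair; then Remark~\ref{remark:multiplicative} immediately yields the desired identity $\widehat{\HH_f(2f)}=\widehat{\HH_{f-1}^{\{3,\ldots,2f\}}}\,\widehat{\DD_{(1,2f-2)}^{\{2,\ldots,2f\}}}$.

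Length-additivity and uniqueness of the factorisation will come directly from standard Coxeter theory applied to the ambient symmetric group $\Sym_{\{2,\ldots,2f\}}$. Indeed, $\DD_{(1,2f-2)}^{\{2,\ldots,2f\}}$ is by definition a system of distinguished right coset representatives for the Young subgroup $\Sym_{\{2\}}\times\Sym_{\{3,\ldots,2f\}}$, and $\HH_{f-1}^{\{3,\ldots,2f\}}\subseteq \Sym_{\{3,\ldots,2f\}}$ lies in that Young subgroup. Hence any product $\sigma\tau$ with $\sigma$ from the subgroup and $\tau$ a distinguished representative is length-additive, and the pair $(\sigma,\tau)$ is uniquely recovered from $\sigma\tau$.

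To pin down the image of the map, I follow the counting strategy of Lemma~\ref{lemma:H_fn}: prove a single inclusion and then match cardinalities. The count
$$
|\HH_{f-1}^{\{3,\ldots,2f\}}|\cdot|\DD_{(1,2f-2)}^{\{2,\ldots,2f\}}|
=\frac{(2f-2)!}{2^{f-1}(f-1)!}\cdot(2f-1)
=\frac{(2f)!}{2^{f}f!}=|\HH_f(2f)|
$$
reduces the problem to verifying $\HH_{f-1}^{\{3,\ldots,2f\}}\,\DD_{(1,2f-2)}^{\{2,\ldots,2f\}}\subseteq \HH_f(2f)$. Writing $\tau=s_2s_3\cdots s_{k-1}$ for some $k\in\{2,\ldots,2f\}$, the product $d=\sigma\tau$ satisfies $(1)d=1$ and $(2)d=k\ge 2$, while $\tau$ sends $j\mapsto j-1$ for $j\in\{3,\ldots,k\}$ and fixes $\{k+1,\ldots,2f\}$, so $\tau$ restricted to $\{3,\ldots,2f\}$ is order-preserving with image $\{2,\ldots,2f\}\setminus\{k\}$. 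The main (and essentially only) obstacle is to transport the defining inequalities for $\sigma\in\HH_{f-1}^{\{3,\ldots,2f\}}$ through this order-preserving shift: once recorded, the inequalities $(3)\sigma<(5)\sigma<\cdots<(2f-1)\sigma$ and $(2i-1)\sigma<(2i)\sigma$ pass to the corresponding ones for $d$, and combined with $(1)d=1<(3)d$ and $(1)d<(2)d$ give the three defining conditions of $\HH_f(2f)$ for $d$.
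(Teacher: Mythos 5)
Your proof is correct and takes essentially the same route as the paper's, which simply invokes the argument of Lemma~\ref{lemma:H_fn}: length-additivity and uniqueness from the Young subgroup/distinguished coset structure, one inclusion, and a matching cardinality count ($\tfrac{(2f)!}{2^f f!}$ on both sides). You merely spell out the inclusion verification and the count that the paper leaves implicit.
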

\begin{proof} This can be proved with the same arguments used for
  Lemma~\ref{lemma:H_fn} by
  showing that
  $\HH_{f-1}^{\{3,\ldots,2f\}}\DD_{(1,2f-2)}^{\{2,\ldots,2f\}}=
  \HH_{f}^{\{1,\ldots,2f\}}$. Note that both sets have cardinality
  $\frac{(2f)!}{2^f\cdot f!}$.
\end{proof}

\begin{lemma}
  We have
  \[
  \widehat{\mathcal{H}_2^{\{1,2,3,4\}}}
  \widehat{\mathcal{D}_{(3,2f-4)}^{\{2,\ldots,2f\}}}=
  \widehat{\mathcal{D}_{(1,2,2f-4)}^{\{2,\ldots,2f\}}}
  \]
  or
  \begin{equation}\label{equation:H_21234}
    \wtangle{38}{4cm}
    =\wtangle{37}{4cm}
  \end{equation}
\end{lemma}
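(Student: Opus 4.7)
The plan is to deduce this identity from Lemma \ref{lemma:coset_representatives}(2), avoiding a direct repeat of the combinatorial/length arguments used in Lemmas \ref{lemma:H_fn} and \ref{lemma:H_f(2f)}. I take $\mu=(3,2f-4)$ and $\lambda=(1,2,2f-4)$, both viewed as compositions of $2f-1$ indexing Young subgroups of $\Sym_{\{2,\ldots,2f\}}$. Then $\Sym_\lambda\subseteq\Sym_\mu$, and $\lambda$ is precisely the refinement of $\mu$ obtained by taking $\nu^1=(1,2)$ and $\nu^2=(2f-4)$. Since $\DD_{(2f-4)}=\{e\}$, Lemma \ref{lemma:coset_representatives}(2) directly yields
\[
\widehat{\DD_{(1,2,2f-4)}^{\{2,\ldots,2f\}}}=\widehat{\iota_1(\DD_{(1,2)})}\,\widehat{\DD_{(3,2f-4)}^{\{2,\ldots,2f\}}},
\]
where $\iota_1\colon\Sym_{\{1,2,3\}}\hookrightarrow\Sym_{\{2,\ldots,2f\}}$ is the embedding onto $\Sym_{\{2,3,4\}}$ sending $s_k$ to $s_{k+1}$.

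The remaining task is to identify $\iota_1(\DD_{(1,2)})$ with $\HH_2^{\{1,2,3,4\}}$ as subsets of $\Sym_{\{2,\ldots,2f\}}$. The minimal-length right coset representatives of $\Sym_1\times\Sym_2$ in $\Sym_3$ are $\{e,s_1,s_1s_2\}$, so $\iota_1(\DD_{(1,2)})=\{e,s_2,s_2s_3\}$. On the other hand, unwinding the defining conditions of $\HH_f$ shows that $\HH_2^{\{1,2,3,4\}}$ consists of the three elements $d\in\Sym_4$ with $(1)d<(3)d$, $(1)d<(2)d$ and $(3)d<(4)d$; these correspond to the three pairings $\{12,34\}$, $\{13,24\}$, $\{14,23\}$, and one reads off directly that they equal $e$, $s_2$ and $s_2s_3$ respectively. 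In particular all three fix $1$ and hence lie in $\Sym_{\{2,3,4\}}$. Combining this identification with the displayed equation finishes the proof.

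I do not anticipate any serious obstacle: the only non-formal step is the three-element enumeration of $\HH_2^{\{1,2,3,4\}}$, which is immediate. As a sanity check the cardinalities balance, namely $3\cdot\binom{2f-1}{3}=(2f-1)(2f-2)(2f-3)/2=(2f-1)!/(1!\,2!\,(2f-4)!)=|\DD_{(1,2,2f-4)}^{\{2,\ldots,2f\}}|$, confirming that no elements get lost in the factorisation.
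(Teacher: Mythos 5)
Your proof is correct and follows essentially the same route as the paper's: both arguments reduce the claim to Lemma~\ref{lemma:coset_representatives}(2) (equation~\eqref{equation:D_mu}) together with the identification $\widehat{\HH_2^{\{1,2,3,4\}}}=\widehat{\DD_{(1,2)}^{\{2,3,4\}}}$. The only cosmetic difference is that the paper obtains this identification by citing equation~\eqref{equation:H_f-1} at $f=2$, whereas you verify it by directly enumerating the three elements $\{e,s_2,s_2s_3\}$.
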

\begin{proof}
  By equation~\eqref{equation:H_f-1} for $f=2$ (note that
  $\widehat{H_1(1)})=1$) and
  equation~\eqref{equation:D_mu}, both sides are equal to
  \[
  \wtangle{39}{4cm}
  \]
\end{proof}

Recall that we consider the case $n=2f$.
Let $\double{s}_i=s_{2i}s_{2i-1}s_{2i+1}s_{2i}=(2i-1,2i+1)(2i,2i+2)$.
The subgroup of $\Sym_{2f}$ generated by $\double{s}_i$ for
$i=1,\ldots,f-1$ is isomorphic to the symmetric group on $f$ letters.
We will
denote this subgroup by $\double{S}_f$.

Let  $\double{w}\in \double{S}_f$. Then $\double{w}\HH_f$ is the set of $d\in
\DD_{(2^f)}$ such that
$(1)\double{w}^{-1}d<(3)\double{w}^{-1}d<(5)\double{w}^{-1}d <\ldots
<(2f-1)\double{w}^{-1}d$. Thus we have
\[\DD_{(2^f)}=\double{S}_f\HH_f\]
and each element $d\in
\DD_{(2^f)}$ can be uniquely written as $d=\double{w}d'$ with
$\double{w}\in\double{S}_f ,d'\in\HH_f$. In general,
$l(d)=l(\double{w})+l(d')$ is not true.

Let
$\double{S}_{(1,f-1)}
=\left\langle\double{s}_2,\double{s}_3,\ldots,\double{s}_{f-1}
\right\rangle$, which is a Young subgroup in $\double{S}_f$, and let $
\double{D}_{(1,f-1)}=\{\double{s}_1\double{s}_2\cdots\double{s}_{k}\mid
k=0,\ldots,f-1\}$ be the corresponding set of coset
representatives. Then we have

\begin{lemma}
  \[
  \widehat{\double{D}_{(1,f-1)}\HH_f(2f)}=\widehat{\HH_{f-1}^{\{3,\ldots,2f\}}}
  \widehat{\DD_{(2,2f-2)}}
  \]
  In diagrams,
  \begin{equation}\label{equation:doubleDH_f}
    \wtangle{40}{4cm}
    =\wtangle{41}{4cm}
  \end{equation}
\end{lemma}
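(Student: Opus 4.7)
The plan is to follow the pattern of Lemmas~\ref{lemma:H_fn} and~\ref{lemma:H_f(2f)}: reduce the asserted identity to a set-level equality
\[
\double{D}_{(1,f-1)}\HH_f(2f)\;=\;\HH_{f-1}^{\{3,\ldots,2f\}}\DD_{(2,2f-2)}
\]
inside $\Sym_{2f}$, and then appeal to Remark~\ref{remark:multiplicative} for the right-hand side. Since $\HH_{f-1}^{\{3,\ldots,2f\}}\subseteq \Sym_{\{3,\ldots,2f\}}$ sits inside the Young subgroup $\Sym_{(2,2f-2)}$ and $\DD_{(2,2f-2)}$ is the corresponding set of distinguished right coset representatives, every product $uv$ with $u\in \HH_{f-1}^{\{3,\ldots,2f\}}$, $v\in \DD_{(2,2f-2)}$ factors uniquely with lengths adding; hence the right-hand side equals $\widehat{\HH_{f-1}^{\{3,\ldots,2f\}}\DD_{(2,2f-2)}}$. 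On the left, the unique factorization $\DD_{(2^f)}=\double{S}_f\HH_f$ observed just before the lemma identifies $\double{D}_{(1,f-1)}\HH_f(2f)$ with a subset of $\Sym_{2f}$ of cardinality $f\cdot|\HH_f(2f)|$.

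The key step is the intrinsic characterization
\[
X := \bigl\{d\in \Sym_{2f} : (2i-1)d<(2i)d\text{ for all } 1\leq i\leq f,\text{ and } (3)d<(5)d<\cdots<(2f-1)d\bigr\},
\]
together with the claim that both sides lie inside $X$. For the right-hand side, any $u\in \HH_{f-1}^{\{3,\ldots,2f\}}$ fixes $1$ and $2$ and satisfies the $\HH$-type inequalities on $\{3,\ldots,2f\}$, while $v\in \DD_{(2,2f-2)}$ is strictly increasing on $\{1,2\}$ and on $\{3,\ldots,2f\}$ separately; composing shows $uv\in X$. For the left-hand side, one checks that $\double{s}_1\double{s}_2\cdots\double{s}_k$ sends the pair $(1,2)$ to $(2k+1,2k+2)$, sends $(2i-1,2i)$ to $(2i-3,2i-2)$ for $2\leq i\leq k+1$, and fixes the remaining pairs. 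Consequently, for $w=\double{s}_1\cdots\double{s}_k\cdot d'$ with $d'\in \HH_f$, the left endpoints $(1)w,(3)w,\ldots,(2f-1)w$ form the sequence $(2k+1)d',(1)d',(3)d',\ldots,(2k-1)d',(2k+3)d',\ldots,(2f-1)d'$; its tail from the second entry on is strictly increasing by the defining property of $\HH_f$, and the within-pair orderings descend from $d'$. Thus $w\in X$.

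A cardinality count then closes the argument. Parameterising an element of $X$ by a perfect matching of $\{1,\ldots,2f\}$ together with the choice of which of its $f$ pairs plays the role of $\{(1)d,(2)d\}$ gives $|X|=f\cdot\frac{(2f)!}{2^f f!}$; this agrees with both $|\double{D}_{(1,f-1)}\HH_f(2f)|$ and $|\HH_{f-1}^{\{3,\ldots,2f\}}|\cdot|\DD_{(2,2f-2)}|=\frac{(2f-2)!}{2^{f-1}(f-1)!}\cdot\binom{2f}{2}$, so all three sets coincide. The only mildly fiddly part of the argument is the explicit tracking of how $\double{s}_1\cdots\double{s}_k$ permutes pair positions; everything else is routine bookkeeping.
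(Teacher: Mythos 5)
Your proof is correct and follows essentially the same route as the paper: reduce to the set identity $\double{D}_{(1,f-1)}\HH_f(2f)=\HH_{f-1}^{\{3,\ldots,2f\}}\DD_{(2,2f-2)}$, observe that both sides sit inside the set of $d\in\DD_{(2^f)}$ with $(3)d<(5)d<\cdots<(2f-1)d$, and finish by a cardinality count (the paper gets the left-hand identification for free from its earlier description of $\double{w}\HH_f$, whereas you verify both containments and compute $|X|$ explicitly, but the mechanism is identical). No gaps.
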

\begin{proof}
  Clearly, $\widehat{\HH_{f-1}^{\{3,\ldots,2f\}}}
  \widehat{\DD_{(2,2f-2)}}=
  \widehat{\HH_{f-1}^{\{3,\ldots,2f\}}\DD_{(2,2f-2)}}$. Thus it
  suffices to show that
  $\HH_{f-1}^{\{3,\ldots,2f\}}\DD_{(2,2f-2)}=\double{D}_{(1,f-1)}\HH_f$. Both
  sets have cardinality
  $\frac{(2f)!}{2^f(f-1)!}$. Now,  $\double{D}_{(1,f-1)}\HH_f$ is the set of
  $d\in\DD_{(2^f)}$, such that  $(3)d<(5)d<\ldots<(2f-1)d$. On the
  other hand, $\HH_{f-1}^{\{3,\ldots,2f\}}\DD_{(2,2f-2)}$ is a subset
  of $\DD_{(2^f)}$ and each element of
  $\HH_{f-1}^{\{3,\ldots,2f\}}\DD_{(2,2f-2)}$  satisfies the condition
  $(3)d<(5)d<\ldots<(2f-1)d$.
\end{proof}

\begin{lemma}
  We have
  \[
  \widehat{\double{s}_1\double{D}_{(1,f-2)}^{\{2,\ldots,f\}}\HH_f(2f)}=
  \widehat{\double{s}_1\HH_2^{\{1,\ldots,4\}}}
  \widehat{\HH_{f-2}^{\{5,\ldots,2f\}}}\widehat{\DD_{(3,2f-4)}^{\{2,\ldots,2f\}}}
  \]

  In diagrams,
  \begin{equation}\label{equation:s_1doubleDH_f}
    \wtangle{42}{4cm}
    =  \wtangle{43}{4cm}
  \end{equation}
\end{lemma}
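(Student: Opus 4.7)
Following the pattern of the earlier lemmas (compare in particular the proofs of \eqref{equation:H_f-1} and \eqref{equation:doubleDH_f}), the plan is to show that both sides of the asserted equation equal $\widehat{S}$ for the same set $S\subseteq\Sym_{2f}$. This reduces the problem to (i) set equality of the underlying products and (ii) length additivity for each factorization.

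For (i), I use the disjoint decomposition $\double{D}_{(1,f-1)}=\{e\}\sqcup\double{s}_1\double{D}_{(1,f-2)}^{\{2,\ldots,f\}}$ together with \eqref{equation:doubleDH_f} to identify the underlying set on the left as $\double{D}_{(1,f-1)}\HH_f(2f)\setminus\HH_f(2f)=\{d\in\DD_{(2^f)}\mid (3)d<(5)d<\cdots<(2f-1)d,\ (1)d>(3)d\}$. Since the value $1$ is necessarily a pair minimum $(2i-1)d$, the condition $(1)d>(3)d$ forces $(3)d=1$. For the right hand side, writing a typical element as $d=\double{s}_1 d_2 d_1 d_0$, the facts that $(3)\double{s}_1=1$, $(1)d_2=1$ for every $d_2\in\HH_2(4)$, and $d_0$ fixes $1$ give $(3)d=1$; the monotonicity $(5)d_1<(7)d_1<\cdots<(2f-1)d_1$ is preserved under the order-preserving action of $d_0$ on $\{5,\ldots,2f\}$. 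The cardinality match $(f-1)\cdot|\HH_f(2f)|=|\HH_2(4)|\cdot|\HH_{f-2}(2f-4)|\cdot|\DD_{(3,2f-4)}^{\{2,\ldots,2f\}}|=\frac{(2f-1)!}{2^{f-1}(f-2)!}$ then closes (i), with $S=\{d\in\DD_{(2^f)}\mid (3)d=1,\ (5)d<(7)d<\cdots<(2f-1)d\}$.

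For (ii), length additivity on the left follows as in earlier proofs, since $\HH_f(2f)$ consists of distinguished coset reps and $\double{s}_1\double{D}_{(1,f-2)}^{\{2,\ldots,f\}}\subseteq\double{D}_{(1,f-1)}$. On the right, disjoint supports give $\ell(\double{s}_1 d_2 d_1)=\ell(\double{s}_1 d_2)+\ell(d_1)$; for the final multiplication by $d_0$, the key observation is that every $d_0\in\DD_{(3,2f-4)}^{\{2,\ldots,2f\}}$ automatically satisfies $(1)d_0=1<(2)d_0<(3)d_0<(4)d_0$ and hence belongs to $\DD_{(4,2f-4)}\subseteq\Sym_{2f}$; since $\double{s}_1 d_2 d_1\in\Sym_{\{1,\ldots,4\}}\times\Sym_{\{5,\ldots,2f\}}$, the standard additivity for distinguished coset representatives yields $\ell(\double{s}_1 d_2 d_1 d_0)=\ell(\double{s}_1 d_2 d_1)+\ell(d_0)$. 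The main obstacle is precisely this step: one cannot naively split $\widehat{\double{s}_1\HH_2^{\{1,\ldots,4\}}}$ as $\widehat{\double{s}_1}\,\widehat{\HH_2^{\{1,\ldots,4\}}}$ (for instance $\ell(\double{s}_1 s_2 s_3)=2\neq 6=\ell(\double{s}_1)+\ell(s_2 s_3)$), so $\double{s}_1\HH_2^{\{1,\ldots,4\}}$ must be kept grouped inside $\Sym_{\{1,\ldots,4\}}$ before the disjoint-support argument and the coset representative property of $d_0$ can close the chain.
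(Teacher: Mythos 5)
Your proof is correct and takes essentially the same route as the paper's: decompose $\double{D}_{(1,f-1)}=\{1\}\,\dot\cup\,\double{s}_1\double{D}_{(1,f-2)}^{\{2,\ldots,f\}}$, identify the left-hand set inside $\DD_{(2^f)}$ via the condition $(1)d>(3)d<(5)d<\cdots<(2f-1)d$ (equivalently $(3)d=1$ and $(5)d<\cdots<(2f-1)d$), check that the right-hand product lies in that set, and match cardinalities $\tfrac{(2f)!(f-1)}{2^f f!}$; your explicit verification of length additivity on the right (disjoint supports, then $d_0\in\DD_{(4,2f-4)}$) is a welcome detail the paper leaves implicit. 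One small caveat: your assertion that length additivity also holds on the left is false --- the paper itself notes that $\ell(\double{w}d')=\ell(\double{w})+\ell(d')$ fails in general for $\double{w}\in\double{S}_f$, $d'\in\HH_f$ (e.g.\ $\ell(\double{s}_1 s_2)=3\neq\ell(\double{s}_1)+\ell(s_2)$) --- but this is harmless, since the left-hand side is a single hat over the product set and needs only the set identification, not a factorization.
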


\begin{proof}
  Note that
  $\double{D}_{(1,f-1)}
  =\{1\}\dot\cup\double{s}_1\double{D}_{(1,f-2)}^{\{2,\ldots,f\}}$. Thus,
  $\double{s}_1\double{D}_{(1,f-2)}^{\{2,\ldots,f\}}\HH_f(2f)$ is the
  set of $d\in\DD_{(2^f)}$, such that  $(3)d<(5)d<\ldots<(2f-1)d$ and
  $(1)d>(3)d$. The elements of $\double{s}_1\HH_2^{\{1,\ldots,4\}}
  \HH_{f-2}^{\{5,\ldots,n\}}\DD_{(3,2f-4)}^{\{2,\ldots,n\}}$ satisfy
  this condition and both sets have cardinality
  $\frac{(2f)!(f-1)}{2^ff!}$.
\end{proof}

 \begin{prop}\label{prop:e_fH_f}
   Let $f\geq 0$ and $\HH_f=\HH_f(n)$. Let $\widehat{e_f}$ be the
   bottom part of  $\widehat{E_f}$. Then we have
   \begin{enumerate}
   \item
   \[ \widehat{e_f}\;\widehat{\DD_{(1^{2f},n-2f)}}
   =(1+qr^{-1})^f \widehat{e_f}\;\widehat{\DD_{(2^{f},n-2f)}}\]
 \item \[\widehat{e_f}\;\widehat{\DD_{(2^{f},n-2f)}}
   = a_f\widehat{e_f}\;\widehat{\HH_f}
   \]
   where $a_0=1$ and $a_i=a_{i-1}\cdot\sum_{k=0}^{i-1}q^{2k}$ for
   $i>0$.
 \end{enumerate}

\end{prop}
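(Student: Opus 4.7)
For (i), I would apply Lemma~\ref{lemma:coset_representatives}(ii) with $\mu=(2^f,n-2f)$ and $\lambda=(1^{2f},n-2f)$ to obtain
\[
\widehat{\DD_{(1^{2f},n-2f)}}=(1+qT_1)(1+qT_3)\cdots(1+qT_{2f-1})\,\widehat{\DD_{(2^f,n-2f)}}.
\]
Left-multiplying by $\widehat{e_f}=E_1E_3\cdots E_{2f-1}$ and using $E_i(1+qT_i)=(1+qr^{-1})E_i$ (a consequence of $E_iT_i=r^{-1}E_i$) together with the commutativity $T_{2i-1}E_{2j-1}=E_{2j-1}T_{2i-1}$ for $i\ne j$, the $f$ factors $(1+qT_{2i-1})$ peel off one at a time, each contributing the scalar $(1+qr^{-1})$.

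For (ii), both sides share a right factor of $\widehat{\DD_{(2f,n-2f)}}$ (extracted from $\widehat{\HH_f}$ by Lemma~\ref{lemma:H_fn} and from $\widehat{\DD_{(2^f,n-2f)}}$ by Lemma~\ref{lemma:coset_representatives}(ii) with $\mu=(2f,n-2f),\lambda=(2^f,n-2f)$), reducing the claim to $\widehat{e_f}\widehat{\DD_{(2^f)}}=a_f\widehat{e_f}\widehat{\HH_f(2f)}$, which I would prove by induction on $f$ (the cases $f\le 1$ being trivial). In the step, Lemma~\ref{lemma:coset_representatives}(ii) with $\mu=(2,2f-2)$ splits $\widehat{\DD_{(2^f)}}=\widehat{\DD_{(2^{f-1})}^{\{3,\ldots,2f\}}}\widehat{\DD_{(2,2f-2)}}$; applying the inductive hypothesis in the subalgebra on $\{3,\ldots,2f\}$, then equation~\eqref{equation:doubleDH_f}, the decomposition $\double{D}_{(1,f-1)}=\{e\}\sqcup\double{s}_1\double{D}_{(1,f-2)}^{\{2,\ldots,f\}}$, and equation~\eqref{equation:s_1doubleDH_f}, yields
\[
\widehat{e_f}\widehat{\DD_{(2^f)}}=a_{f-1}\widehat{e_f}\widehat{\HH_f(2f)}+a_{f-1}\widehat{e_f}\widehat{\double{s}_1\HH_2^{\{1,\ldots,4\}}}\widehat{\HH_{f-2}^{\{5,\ldots,2f\}}}\widehat{\DD_{(3,2f-4)}^{\{2,\ldots,2f\}}},
\]
so the task reduces to showing the second summand equals $(a_f-a_{f-1})\widehat{e_f}\widehat{\HH_f(2f)}$.

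My approach here is twofold. First, an \emph{absorption identity} $\widehat{e_f}\widehat{\double{s}_1\HH_2^{\{1,\ldots,4\}}}=q^2\widehat{e_f}\widehat{\HH_2^{\{1,\ldots,4\}}}$: since $E_5,\ldots,E_{2f-1}$ commute with the $\{1,\ldots,4\}$-subalgebra, it reduces to the $f=2$ identity $E_1E_3\widehat{\double{s}_1\HH_2(4)}=q^2E_1E_3\widehat{\HH_2(4)}$, which I verify by a direct $B_4$-computation from the transposed relations $E_1T_2T_1=E_1E_2$ and $E_3T_2T_3=E_3E_2$ (obtained by applying the $T_i,E_i$-fixing antiautomorphism to $T_1T_2E_1=E_2E_1$ and $T_3T_2E_3=E_2E_3$), the nesting $E_1E_2E_1=E_1$, and the quadratic relation for $T_2^{-1}$. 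Second, after commuting $\widehat{\HH_{f-2}^{\{5,\ldots,2f\}}}$ past $\widehat{\HH_2^{\{1,\ldots,4\}}}$ (disjoint support) and applying equation~\eqref{equation:H_21234}, splitting $\widehat{\DD_{(1,2,2f-4)}^{\{2,\ldots,2f\}}}$ via Lemma~\ref{lemma:coset_representatives}(ii), and invoking the intermediate identity $\widehat{e_{f-1}^{\{3,\ldots,2f\}}}\widehat{\HH_{f-2}^{\{5,\ldots,2f\}}}\widehat{\DD_{(2,2f-4)}^{\{3,\ldots,2f\}}}=\frac{a_{f-1}}{a_{f-2}}\widehat{e_{f-1}^{\{3,\ldots,2f\}}}\widehat{\HH_{f-1}^{\{3,\ldots,2f\}}}$ (obtained by equating the two evaluations of $\widehat{e_{f-1}^{\{3,\ldots,2f\}}}\widehat{\DD_{(2^{f-1})}^{\{3,\ldots,2f\}}}$ afforded by the inductive hypothesis at levels $f-1$ and $f-2$), Lemma~\ref{lemma:H_f(2f)} reassembles everything into $q^2 a_{f-1}^2/a_{f-2}\cdot\widehat{e_f}\widehat{\HH_f(2f)}$. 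The recursion $a_f=a_{f-1}(1+q^2+\cdots+q^{2(f-1)})$ gives $q^2 a_{f-1}^2/a_{f-2}=a_f-a_{f-1}$, closing the induction. The main obstacle is the $f=2$ absorption identity; past it the proof is careful bookkeeping with the diagram identities from the preceding lemmas.
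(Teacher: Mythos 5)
Your proposal is correct and follows essentially the same route as the paper: the same reduction to the case $n=2f$ via Lemma~\ref{lemma:H_fn}, the same decompositions through equations \eqref{equation:doubleDH_f}, \eqref{equation:s_1doubleDH_f}, \eqref{equation:H_21234} and \eqref{equation:H_f-1}, and the same $f=2$ absorption identity (the paper's equation \eqref{equation:case2}, checked there by a direct tangle computation equivalent to your algebraic verification via $E_1E_2T_1=E_1T_2^{-1}$). The only difference is organizational: the paper carries the auxiliary identity $\widehat{e_f}\,\widehat{\double{D}_{(1,f-1)}\HH_f}=\bigl(\sum_{k=0}^{f-1}q^{2k}\bigr)\widehat{e_f}\,\widehat{\HH_f}$ as a second statement in a joint induction, whereas you recover the needed instance of it by equating the two evaluations of $\widehat{e_{f-1}}\,\widehat{\DD_{(2^{f-1})}}$ afforded by the inductive hypothesis at levels $f-1$ and $f-2$ (legitimate since $a_{f-2}$ is invertible in $\mathds{Q}(r,q)$) --- a harmless and valid shortcut.
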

\begin{proof}
  For $f=0$, the proposition is trivial.
  \begin{enumerate}
  \item
    Note that $\DD_{(1,1)}=\Sym_2$. We have
    \begin{align*}
      &\wtangle{44}{4cm}
    \overset{\eqref{equation:D_mu}}
    =\wtangle{46}{4cm}\\
    &\overset{\eqref{equation:E_1S_2}}=(1+qr^{-1})^f
    \wtangle{45}{4cm}
    \end{align*}
    which shows the first claim.
  \item
    We have
    \begin{align*}
      \wtangle{45}{4cm}&
      \overset{\eqref{equation:D_mu}}=
      \wtangle{48}{4cm}\text{
        and }\\\bigskip\\
      \wtangle{47}{4cm}&
      \overset{\eqref{equation:H_fn}}=
      \wtangle{49}{4cm}
    \end{align*}
    Thus, without loss of generality we can assume that $n=2f$.
    We show that
    \begin{align}
      \widehat{e_f}\;\widehat{\double{D}_{(1,f-1)}\HH_f}&
      =\left(\sum_{k=0}^{f-1}q^{2k}\right)
      \widehat{e_f}\;\widehat{\HH_f}\text{ and}\label{equation:proof1}\\
        \widehat{e_f}\;\widehat{\DD_{(2^f)}}&=a_f\widehat{e_f}\;\widehat{\HH_f}
        \label{equation:proof2}
    \end{align}
    For $f=1$ these equations are trivial. Let $f=2$, then
    $\double{D}_{(1,1)}=\double{S}_2=\{1,\double{s}_1\}$ and thus
    $\double{D}_{(1,1)}\HH_2=\DD_{(2,2)}$. We have
    \begin{align*}
      \{T_d\mid d\in\HH_2\}&=\left\{\quad
        \wtangle{50}{1cm}\quad,\quad
        \wtangle{51}{1cm}\quad,\quad
        \wtangle{52}{1cm}\quad
      \right\}\\
      \{T_d\mid d\in\double{s}_1\HH_2\}&=\left\{\quad
        \wtangle{53}{1cm}\quad,\quad
        \wtangle{54}{1cm}\quad,\quad
        \wtangle{55}{1cm}\quad
      \right\}
    \end{align*}
    Then
    \begin{align}
      \widehat{e_2}\widehat{\double{s}_1\HH_2}&=
      q^2\wtangle{57}{1cm}
      +q^3\wtangle{58}{1cm}
      +q^4\wtangle{56}{1cm}\notag\\
      \smallskip\notag\\
      &=q^2\left(
        q^2\wtangle{57}{1cm}
        +q\wtangle{59}{1cm}
        +\wtangle{56}{1cm}
      \right)=
      q^2  \widehat{e_2}\widehat{\HH_2}\label{equation:case2}
      \end{align}
      It follows that equations~\eqref{equation:proof1} and
      \eqref{equation:proof2} hold for $f=2$. Suppose now that
      \eqref{equation:proof1} and
      \eqref{equation:proof2} hold for $f-1$. Then,
      $\double{D}_{(1,f-1)}=\{1\}\dot\cup
      \double{s}_1\double{D}_{(1,f-2)}^{\{2,\ldots,f\}}$ and thus
      \begin{align*}
        \widehat{e_f}\;\widehat{\double{D}_{(1,f-1)}\HH_f}
        &=\wtangle{62}{4cm}\\
        &\overset{\eqref{equation:s_1doubleDH_f}}=
        \wtangle{61}{4cm}
        +\wtangle{64}{4cm}
      \end{align*}
      By \eqref{equation:case2}, this equals
      \begin{align*}
        &
        \wtangle{61}{4cm}
        +q^2\wtangle{63}{4cm}\\
        &\overset{\eqref{equation:H_21234}}=
        \wtangle{61}{4cm}
        +q^2\wtangle{65}{4cm}\\
      \end{align*}
      Applying equation~\eqref{equation:D_mu} to the compositions
      $\lambda=(1,2,2f-4)$ and $\mu=(1,2f-2)$  and then
      equation~\eqref{equation:doubleDH_f} for $f-1$, we obtain
     \begin{align*}
      \widehat{e_f}\;\widehat{\double{D}_{(1,f-1)}\HH_f}&
     =\wtangle{61}{4cm}
        +q^2\wtangle{66}{4cm}\\
        &\overset{\text{induction}}=
        \wtangle{61}{4cm}
        +q^2\sum_{k=0}^{f-2}q^{2k}
        \wtangle{67}{4cm}\\
        &\overset{\eqref{equation:H_f-1}}= \sum_{k=0}^{f-1}q^{2k}
        \wtangle{61}{4cm}
      \end{align*}
      Thus, equation~\eqref{equation:proof1} holds for $f$. Now,
      \begin{align*}
        &\widehat{e_f}\;\widehat{\DD_{(2^f)}}\\
        &=
        \wtangle{93}{4cm}
        \overset{\eqref{equation:D_mu}}=
        \wtangle{95}{4cm}
        \overset{\text{ind.}}=a_{f-1}
        \wtangle{94}{4cm}
      \end{align*}
      Then
      \eqref{equation:proof2} follows from \eqref{equation:doubleDH_f}
      and \eqref{equation:proof1}.
  \end{enumerate}

\end{proof}

Recall that
$x_f=\widehat{\HH_f^{-1}}\widehat{\Sym_{\{2f+1,\ldots,n\}}}
\widehat{E_f}\widehat{H_f}$ and let $y_f=\widehat{\Sym_n}
\widehat{E_f}\widehat{H_f}$. We have
\begin{align*}
  y_f&=\widehat{\Sym_n}\widehat{E_f}\widehat{H_f}
  \overset{\eqref{equation:Sym}}=
  \widehat{\DD_{(1^{2f},n-2f)}^{-1}}\widehat{\Sym_{(1^{2f},n-2f)}}
  \widehat{E_f}\widehat{H_f}\\
  &=\widehat{\DD_{(1^{2f},n-2f)}^{-1}}
  \widehat{E_f}\widehat{\Sym_{(1^{2f},n-2f)}}\widehat{H_f}\\
  &\overset{\text{Prop.}\ref{prop:e_fH_f}}=(1+qr^{-1})^fa_f
  \widehat{\HH_f^{-1}}
  \widehat{E_f}\widehat{\Sym_{(1^{2f},n-2f)}}\widehat{H_f}=(1+qr^{-1})^fa_fx_f
\end{align*}

In order to compute $y_fE_1$ and thus $x_fE_1$, we decompose $\HH_f$
into $5$ pairwise disjoint subsets. Note that if $d\in\HH_f$ then
$(1)d^{-1}=1$ or $(1)d^{-1}=2f+1$. Let
\begin{eqnarray*}
  \HH_f^{(a)}&=&\{d\in\HH_f\mid (1)d^{-1}=1,(2)d^{-1}=2\}\\
  \HH_f^{(b)}&=&\{d\in\HH_f\mid (1)d^{-1}=1,(2)d^{-1}=3\}\\
  \HH_f^{(c)}&=&\{d\in\HH_f\mid (1)d^{-1}=1,(2)d^{-1}=2f+1\}\\
  \HH_f^{(d)}&=&\{d\in\HH_f\mid (1)d^{-1}=2f+1,(2)d^{-1}=1\}\\
  \HH_f^{(e)}&=&\{d\in\HH_f\mid (1)d^{-1}=2f+1,(2)d^{-1}=2f+2\}
\end{eqnarray*}

We have
\begin{lemma}\label{lemma:H_f^a} $\HH_f$ is a disjoint union of $\HH_f^{(a)}$, $\HH_f^{(b)}$, $\HH_f^{(c)}$, $\HH_f^{(d)}$ and $\HH_f^{(e)}$, and
  \begin{eqnarray*}
  \HH_f^{(a)}&=&\HH_{f-1}^{\{3,\ldots,n\}}\\
  \HH_f^{(b)}&=&s_2\HH_{f-2}^{\{5,\ldots,n\}}\DD_{(1,1,n-4)}^{\{3,\ldots,n\}}\\
  \HH_f^{(c)}&=&s_{2f}s_{2f-1}\cdots s_2\HH_{f-1}^{\{4,\ldots,n\}}
  \DD_{(1,n-3)}^{\{3,\ldots,n\}}\\
  \HH_f^{(d)}&=&s_{2f}s_{2f-1}\cdots s_2s_1\HH_{f-1}^{\{4,\ldots,n\}}
  \DD_{(1,n-3)}^{\{3,\ldots,n\}}\\
  \HH_f^{(e)}&=&s_{2f}s_{2f-1}\cdots s_1 s_{2f+1}s_{2f}\cdots s_2
  \HH_{f}^{\{3,\ldots,n\}}
\end{eqnarray*}
Furthermore, in each of these products lengths add.
\end{lemma}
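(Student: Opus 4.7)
The plan is to first establish the disjoint-union decomposition, then verify each of the five explicit factorizations, and finally check the length-additivity assertion.

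For the disjoint union, the key observation is that the defining conditions on $\HH_f$ force $(1)d$ to be the minimum of $\{(1)d,\ldots,(2f)d\}$---since $(1)d<(3)d<\ldots<(2f-1)d$ together with $(2i-1)d<(2i)d$ makes $(1)d$ strictly smaller than every other entry in positions $1,\ldots,2f$---while the third axiom makes $(2f+1)d$ the minimum of $\{(2f+1)d,\ldots,(n)d\}$. The overall minimum value $1$ therefore sits at position $1$ or $2f+1$, giving $(1)d^{-1}\in\{1,2f+1\}$. A parallel analysis on the value $2$ in each subcase yields $(2)d^{-1}\in\{2,3,2f+1\}$ when $(1)d=1$ (any other position would force some $(2i-1)d$ or $(2f+1)d$ to equal $1$ again, contradicting $(1)d=1$) and $(2)d^{-1}\in\{1,2f+2\}$ when $(2f+1)d=1$. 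These five subcases are exactly the sets $\HH_f^{(a)},\ldots,\HH_f^{(e)}$.

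For the factorizations themselves, case (a) is immediate: the elements of $\HH_f$ fixing both $1$ and $2$ restrict to elements of $\Sym_{\{3,\ldots,n\}}$ satisfying exactly the $\HH_{f-1}$ conditions. For cases (b)--(e) I would use the same strategy as in Lemmas~\ref{lemma:H_fn} and \ref{lemma:H_f(2f)}: show the product-set on the right-hand side is contained in the claimed $\HH_f^{(x)}$ by computing the action of the explicit prefix ($s_2$, $s_{2f}\cdots s_2$, $s_{2f}\cdots s_1$, or $s_{2f}\cdots s_1 s_{2f+1}\cdots s_2$) on positions $1$ and $2$ and verifying the remaining $\HH_f$ chain inequalities, then match cardinalities using the standard formulas $|\HH_g(m)|=m!/(2^g g!(m-2g)!)$ and $|\DD_\mu|=n!/(\mu_1!\cdots\mu_k!)$. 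In every case the two-sided count matches, so equality follows.

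The final assertion is that lengths add in each factorization. For cases (b)--(d) each prefix is a distinguished coset representative of an appropriate Young subgroup---for instance $s_{2f}\cdots s_2$ is the minimal-length element moving position $2f+1$ to position $2$ in the coset of $\Sym_{\{2,\ldots,2f\}\cup\{2f+1\}}$---and the remaining factors form a nested system of distinguished coset representatives, so length-additivity is automatic by the standard parabolic argument already invoked in the earlier lemmas. The main obstacle will be case (e), where the prefix $s_{2f}\cdots s_1 s_{2f+1}\cdots s_2$ is a concatenation of two shift blocks totalling $4f$ simple reflections. I would handle this by a direct inversion count: the underlying permutation sends $k\mapsto k+2$ for $1\le k\le 2f$, $2f+1\mapsto 1$, $2f+2\mapsto 2$, and fixes all larger indices, yielding exactly $4f$ inversions, so the expression is reduced. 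Moreover this permutation is the unique minimal-length element of its coset in $\Sym_n/\Sym_{\{3,\ldots,n\}}$, so length-additivity with any $v\in\HH_f^{\{3,\ldots,n\}}$ follows from the parabolic argument since $v$ fixes $\{1,2\}$ pointwise and its reduced word uses only $s_3,\ldots,s_{n-1}$.
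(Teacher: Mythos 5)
Your proof is correct and follows essentially the same route as the paper: disjointness read off from the definition (via the positions of the values $1$ and $2$), containment of each right-hand side in the corresponding $\HH_f^{(\alpha)}$, a cardinality argument to upgrade containment to equality, and a direct descent/inversion verification that lengths add. The one point worth flagging is the counting step: your standard formulas give the cardinalities of the five right-hand sides, but matching ``two-sided counts'' case by case would also require computing each $|\HH_f^{(\alpha)}|$ separately, which you have not done; the paper instead sums the five right-hand cardinalities and checks that the total equals $|\HH_f|=\frac{n!}{2^f f!\,(n-2f)!}$, which together with the disjoint-union decomposition already forces all five containments to be equalities --- a one-line repair of your plan. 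Your explicit inversion count of $4f$ for the prefix in case (e), and the observation that this prefix is the distinguished representative of its coset modulo $\Sym_{\{3,\ldots,n\}}$, correctly supply detail that the paper leaves to the reader.
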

\begin{proof} It follows directly from the definition that $\HH_f$ is a disjoint union of
$\HH_f^{(a)}$, $\HH_f^{(b)}$, $\HH_f^{(c)}$, $\HH_f^{(d)}$ and $\HH_f^{(e)}$. For the products on the right hand side of above five equalities, it is easy to check that lengths add. It is also easy to
see that the right hand sides are subsets of the left hand sides respectively.
By a simple counting, the number of elements of the right hand sides add up
to the cardinality of $\HH_f$. This implies the above five equalities.
\end{proof}

\begin{prop}
  We have
  \begin{align*}
    y_0E_1&=\widehat{\Sym_n}E_1\\
    y_fE_1&=\frac{(rq^{2n-2f-2}-q^{-1})(1+qr^{-1})}{q-q^{-1}}\widehat{\Sym_n}
    \widehat{E_f}\widehat{\HH_{f-1}^{\{3,\ldots,n\}}}
    +q^{2f}\widehat{\Sym_n}\widehat{E_{f+1}}
    \widehat{\HH_{f}^{\{3,\ldots,n\}}}
  \end{align*}
  for $1\leq f\leq[\frac{n}2]$.
  Here, we set $\widehat{\Sym_n}\widehat{E_{f+1}}
  \widehat{\HH_{f}^{\{3,\ldots,n\}}}=0$, if $f=[\frac{n}2]$ i.~e.~if
  $n=2f$ or $n=2f+1$.
\end{prop}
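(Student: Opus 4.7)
The $f=0$ case is immediate from $\widehat{E_0}=\widehat{\HH_0}=1$. For $f\geq 1$, the plan is to expand $y_f E_1 = \widehat{\Sym_n}\widehat{E_f}\widehat{\HH_f}E_1$ by splitting $\widehat{\HH_f}$ according to Lemma~\ref{lemma:H_f^a} into the five pieces $\widehat{\HH_f^{(a)}},\ldots,\widehat{\HH_f^{(e)}}$, computing the product $\widehat{E_f}\widehat{\HH_f^{(x)}}E_1$ for each, and then combining after left-multiplication by $\widehat{\Sym_n}$. The key observation is that in each factorization $\HH_f^{(x)}=w\cdot\HH'$ given by Lemma~\ref{lemma:H_f^a}, the tail $\HH'$ uses only generators $s_i$ with $i\geq 3$, so $T_{\HH'}$ commutes with $E_1$ and every piece reduces to evaluating $\widehat{E_f}T_wE_1$ for a specific short word $w$.

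For (a), $w=1$ and $\widehat{E_f}E_1=\delta\widehat{E_f}$ since $E_f=E_1E_3\cdots E_{2f-1}$ and $E_1^2=\delta E_1$. For (b), $w=s_2$; sliding $T_2$ through the commuting $E_5,\ldots,E_{2f-1}$ and applying $E_1T_2E_1=rE_1$ gives $\widehat{E_f}T_2E_1=r\widehat{E_f}$. For (c) and (d), the longer prefixes $T_{2f}T_{2f-1}\cdots T_2$ (respectively $T_{2f}\cdots T_2T_1$) multiplied on the right by $E_1$ can be reduced using $T_1E_1=r^{-1}E_1$ and $E_iT_{i+1}E_i=rE_i$ iteratively; they collapse to scalar multiples of $\widehat{E_f}$ times a residual factor $T_j$ (for some $j\geq 3$) or the identity. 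For (e), the prefix $T_{2f}\cdots T_1T_{2f+1}\cdots T_2$ telescopes against the right-hand $E_1$ via iterated applications of $T_iT_{i+1}E_i=E_{i+1}E_i$ and $E_iE_{i+1}E_i=E_i$; indeed one checks directly for $f=1$ that $E_1T_2T_1T_3T_2E_1=E_1E_3=\widehat{E_2}$, and the general case yields $\widehat{E_f}T_wE_1=\widehat{E_{f+1}}$.

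The five resulting terms are then added, left-multiplied by $\widehat{\Sym_n}$, and simplified. Residual $T_j$ factors from (c) and (d) are converted via the identity $\widehat{\Sym_n}T_j=q\widehat{\Sym_n}+\tfrac{1-q^2}{r+q}\widehat{\Sym_n}E_j$ (a consequence of equation~\eqref{equation:SymT1T2cdots}); the resulting $E_j$ pieces then absorb into a shifted copy of $\widehat{E_{f+1}}$. After this rewriting the combined (c), (d), (e) contribution to the $\widehat{\Sym_n}\widehat{E_{f+1}}\widehat{\HH_f^{\{3,\ldots,n\}}}$ term collapses (via cancellation of a $q^{4f}$ contribution from (e) with opposing contributions from (c) and (d)) to exactly $q^{2f}$, while the combined (a), (b), (c), (d) contribution to the $\widehat{\Sym_n}\widehat{E_f}\widehat{\HH_{f-1}^{\{3,\ldots,n\}}}$ term, after collecting $\delta$-, $r$- and $q$-factors and using $\delta=(q-q^{-1}+r-r^{-1})/(q-q^{-1})$, yields precisely the stated coefficient $A=(rq^{2n-2f-2}-q^{-1})(1+qr^{-1})/(q-q^{-1})$.

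The main obstacle is the precise bookkeeping of $q$- and $r$-factors in pieces (c), (d), (e): these involve long braid words whose interaction with $\widehat{E_f}$ and $E_1$ requires many applications of BMW relations, and a single sign or exponent error derails the match with the claimed coefficient. A secondary technical difficulty is handling the small-$f$ edge cases of Lemma~\ref{lemma:H_f^a} (for $f=1$, pieces (b) and (c) partially overlap in the naive indexing) and identifying the residual sums as $\widehat{\HH_{f-1}^{\{3,\ldots,n\}}}$ or $\widehat{\HH_f^{\{3,\ldots,n\}}}$ using Lemmas~\ref{lemma:H_fn} and \ref{lemma:H_f(2f)}.
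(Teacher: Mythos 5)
Your proposal follows essentially the same route as the paper's proof: the same five-fold decomposition of $\HH_f$ from Lemma~\ref{lemma:H_f^a}, term-by-term reduction of each piece against $E_1$ (with the tails commuting past $E_1$ since they only involve $s_i$ for $i\geq 3$), conversion of the residual coset sums via $\widehat{\Sym_n}T_j=q\widehat{\Sym_n}+\frac{1-q^2}{r+q}\widehat{\Sym_n}E_j$ (packaged in the paper as Lemma~\ref{lemma:SymDD}), and the same cancellation of the $q^{4f}$ contributions from $(c)$, $(d)$, $(e)$ leaving $q^{2f}$. The one place your sketch oversimplifies is the intermediate form of the $(c)$/$(d)$ pieces, which do not literally collapse to $\widehat{E_f}$ times a single residual $T_j$ but produce shifted $E$-factors that must be reorganized via Proposition~\ref{prop:e_fH_f} and equation~\eqref{equation:ESym_n} before Lemma~\ref{lemma:SymDD} can be applied; this is bookkeeping rather than a gap.
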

\begin{proof}
  The first equality is trivial.
  Let $f\geq 1$. In order to obtain
  $y_fE_1=\widehat{\Sym_n}\widehat{E_f}\widehat{\HH_{f}}E_1$, we compute
  $\widehat{\Sym_n}\widehat{E_f}\widehat{\HH_{f}^{(\alpha)}}E_1$
  for each $\alpha\in\{a,b,c,d,e\}$.

  First, let $\alpha=a$. By Lemma~\ref{lemma:H_f^a},
  $\HH_f^{(a)}=\HH_{f-1}^{\{3,\ldots,n\}}$ and thus
  \begin{equation}
    \widehat{\Sym_n}\widehat{E_f}\widehat{\HH_{f}^{(a)}}E_1=
    \widehat{\Sym_n}\widehat{E_f}\widehat{\HH_{f-1}^{\{3,\ldots,n\}}}E_1=
    \widehat{\Sym_n}\widehat{E_f}E_1\widehat{\HH_{f-1}^{\{3,\ldots,n\}}}
    =
    \delta\widehat{\Sym_n}\widehat{E_f}\widehat{\HH_{f-1}^{\{3,\ldots,n\}}}
    \label{equation:H_f^a}
  \end{equation}
  Note, that  \eqref{equation:H_f^a} holds for all
  $f\in\{1,\ldots,[\frac{n}2]\}$.
  If $\alpha=b$ and $f\geq 2$, then by Lemma~\ref{lemma:H_f^a}
  $\widehat{\HH_f^{(b)}}
  =qT_2\widehat{\HH_{f-2}^{\{5,\ldots,n\}}}
  \widehat{\DD_{(1,1,n-4)}^{\{3,\ldots,n\}}}$ and thus
  \begin{align*}
    \widehat{\Sym_n}\widehat{E_f}\widehat{\HH_{f}^{(b)}}E_1&=
    q\wtangle{68}{4cm}=
    qr\wtangle{69}{4cm}
  \end{align*}
  By Proposition~\ref{prop:e_fH_f}, this equals
  \begin{align}
    &\frac{qr(1+qr^{-1})^{2-f}}{a_{f-2}}
    \wtangle{70}{4cm}
    \overset{\eqref{equation:D_mu}}=
    \frac{qr(1+qr^{-1})^{2-f}}{a_{f-2}}
    \wtangle{71}{4cm}\notag
    \\
    &=
    qr(1+qr^{-1})\frac{a_{f-1}}{a_{f-2}}
    \wtangle{72}{4cm}\notag\\
    &=qr(1+qr^{-1})\left(\sum_{k=0}^{f-2}q^{2k}\right)  \widehat{\Sym_n}
    \widehat{E_f}\widehat{\HH_{f-1}^{\{3,\ldots,n\}}}  \label{equation:H_f^b}
  \end{align}
  Note that $\HH_{f}^{(b)}=\emptyset$ and $\sum_{k=0}^{f-2}q^{2k}=0$
  if $f=1$. Hence, this equation is also valid for  $f=1$.

  Consider now the cases $\alpha=c$ and $\alpha=d$. First, let
  $f\neq[\frac{n}2]$. Then we have
  \begin{align*}
    &\widehat{\Sym_n}\widehat{E_f}
    \left(\widehat{\HH_{f}^{(c)}}+\widehat{\HH_{f}^{(d)}}\right)E_1\\
    &=
    \widehat{\Sym_n}\widehat{E_f}
    \left(q^{2f-1}T_{2f}T_{2f-1}\cdots T_2+
      q^{2f}T_{2f}T_{2f-1}\cdots T_2T_1\right)\widehat{\HH_{f-1}^{\{4,\ldots,n\}}}
    \widehat{\DD_{(1,n-3)}^{\{3,\ldots,n\}}}
    E_1\\
    &=q^{2f-1}\wtangle{73}{4cm}+
    q^{2f}\wtangle{74}{4cm}\\
    &=q^{2f}(rq^{-1}+1)
    \wtangle{75}{4cm}
    =\frac{q^{2f}(rq^{-1}+1)}{a_{f-1}(1+qr^{-1})^{f-1}}
    \wtangle{76}{4cm}
  \end{align*}
  By equation~\eqref{equation:D_mu}, we have
  \[
  \widehat{\DD_{(1^{2f-2},n-2f-1)}^{\{4,\ldots,n\}}}
  \widehat{\DD_{(1,n-3)}^{\{3,\ldots,n\}}}=
  \widehat{\DD_{(1^{2f-1},n-2f-1)}^{\{3,\ldots,n\}}}
  =\widehat{\Sym_{2f-1}^{\{3,\ldots,2f+1\}}}
  \widehat{\DD_{(2f-1,n-2f-1)}^{\{3,\ldots,n\}}}
  \]
  Applying equation~\eqref{equation:ESym_n}, we obtain
  \begin{align*}&\widehat{\Sym_n}\widehat{E_f}
    \left(\widehat{\HH_{f}^{(c)}}+\widehat{\HH_{f}^{(d)}}\right)E_1\\
    &=\frac{q^{2f}(rq^{-1}+1)}{a_{f-1}(1+qr^{-1})^{f-1}}
    \wtangle{77}{4cm}
    =\frac{q^{2f}(rq^{-1}+1)}{a_{f-1}(1+qr^{-1})^{f-1}}
    \wtangle{78}{4cm}
  \end{align*}

  By equation~\eqref{equation:D_mu} and \eqref{equation:Sym},
  \begin{align*}
    \widehat{\Sym_{2f-1}^{\{3,\ldots,2f+1\}}}
    \widehat{\DD_{(2f-1,n-2f-1)}^{\{3,\ldots,n\}}}
    &= \widehat{\DD_{(1^{2f-1},n-2f-1)}^{\{3,\ldots,n\}}}
    =\widehat{\DD_{(1,n-2f-1)}^{\{2f+1,\ldots,n\}}}
    \widehat{\DD_{(1^{2f-2},n-2f)}^{\{3,\ldots,n\}}}\\
    \widehat{\Sym_n}&=
    \widehat{\DD_{(1^{2f},n-2f)}^{-1}}\widehat{\Sym_{n-2f}^{\{2f+1,\ldots,n\}}}
  \end{align*}
  Thus by equation~\eqref{equation:SymDD}, we have
  \begin{align*}
    &\wtangle{78}{4cm}
    =\wtangle{79}{4cm}\\
    &= \sum_{k=0}^{n-2f-1}q^{2k}
    \wtangle{81}{4cm}
    +\frac{1-q^2}{q^{-1}r+1}\cdot
    \wtangle{80}{4cm}
 \end{align*}

 The first summand equals
 \[
 \left(\sum_{k=0}^{n-2f-1}q^{2k}\right)
 \widehat{\Sym_n}\widehat{E_f}
 \widehat{\DD_{(1^{2f-2},n-2f)}}=
 a_{f-1}(1+qr^{-1})^{f-1}  \left(\sum_{k=0}^{n-2f-1}q^{2k}\right)
 \widehat{\Sym_n}\widehat{E_f}
 \widehat{\HH_{f-1}^{\{3,\ldots,n\}}}
 \]
 By Proposition~\ref{prop:e_fH_f}, the second summand is
 \begin{align*}
   &\frac{1-q^2}{q^{-1}r+1} \widehat{\Sym_n}\widehat{E_{f+1}}
   \widehat{\DD_{(2,n-2f-2)}^{\{2f+1,\ldots,n\}}}
   \widehat{\DD_{(1^{2f-2},n-2f)}^{\{3,\ldots,n\}}}\\
   &=\frac{(1-q^2)(1+qr^{-1})^{f-1}}{q^{-1}r+1} \widehat{\Sym_n}\widehat{E_{f+1}}
   \widehat{\DD_{(2,n-2f-2)}^{\{2f+1,\ldots,n\}}}
   \widehat{\DD_{(2^{f-1},n-2f)}^{\{3,\ldots,n\}}}\\
   &=\frac{(1-q^2)(1+qr^{-1})^{f-1}}{q^{-1}r+1} \widehat{\Sym_n}\widehat{E_{f+1}}
   \widehat{\DD_{(2^f,n-2f-2)}^{\{3,\ldots,n\}}}\\
   &=\frac{(1-q^2)(1+qr^{-1})^{f-1}a_f}{q^{-1}r+1}
   \widehat{\Sym_n}\widehat{E_{f+1}}
   \widehat{\HH_f^{\{3,\ldots,n\}}}
 \end{align*}
 Taking this altogether, we get
 \begin{align}
   &\widehat{\Sym_n}\widehat{E_f}
   \left(\widehat{\HH_{f}^{(c)}}+\widehat{\HH_{f}^{(d)}}\right)E_1\notag\\
   &= \frac{q^{2f}(rq^{-1}+1)}{a_{f-1}(1+qr^{-1})^{f-1}}\cdot
   \left( a_{f-1}(1+qr^{-1})^{f-1}  \left(\sum_{k=0}^{n-2f-1}q^{2k}\right)
     \widehat{\Sym_n}\widehat{E_f}
     \widehat{\HH_{f-1}^{\{3,\ldots,n\}}}\right.\notag\\
     &\left.+\frac{(1-q^2)(1+qr^{-1})^{f-1}a_f}{q^{-1}r+1}
     \widehat{\Sym_n}\widehat{E_{f+1}}
     \widehat{\HH_f^{\{3,\ldots,n\}}} \right)\notag\\
   &= q^{2f}(rq^{-1}+1)
   \left(\sum_{k=0}^{n-2f-1}q^{2k}\right)
   \widehat{\Sym_n}\widehat{E_f}
   \widehat{\HH_{f-1}^{\{3,\ldots,n\}}}+q^{2f}(1-q^2)
   \left(\sum_{k=0}^{f-1}q^{2k}\right)
   \widehat{\Sym_n}\widehat{E_{f+1}}
   \widehat{\HH_f^{\{3,\ldots,n\}}} \label{equation:H_f^cd}
   \end{align}
   If $f=[\frac{n}2]$, i.~e.~$n=2f$ or $n=2f+1$, then the same
   considerations can be done, except for omitting the second
   summand. But $\widehat{\Sym_n}\widehat{E_{f+1}}
   \widehat{\HH_f^{\{3,\ldots,n\}}}=0$ by our convention. Thus
   equation~\eqref{equation:H_f^cd} holds for
   $f\in\{1,\ldots,[\frac{n}2]\}$.

   Finally, if $\alpha=e$ and $f\neq[\frac{n}2]$, then
   \begin{align}
   \widehat{\Sym_n}\widehat{E_f}
   \widehat{\HH_{f}^{(e)}}E_1
   &= q^{4f}\widehat{\Sym_n}\widehat{E_f}
   T_{2f}T_{2f-1}\cdots T_1 T_{2f+1}T_{2f}\cdots T_2
   \widehat{\HH_{f}^{\{3,\ldots,n\}}}E_1\notag\\
   &=q^{4f}\wtangle{82}{4cm}
   =q^{4f}\wtangle{83}{4cm}
   \notag\\
   &= q^{4f}\widehat{\Sym_n}\widehat{E_{f+1}}
   \widehat{\HH_f^{\{3,\ldots,n\}}} \label{equation:H_f^e}
   \end{align}
   Again, if $f=[\frac{n}2]$ then this equation also holds.
   Adding \eqref{equation:H_f^a},
   \eqref{equation:H_f^b}, \eqref{equation:H_f^cd} and
   \eqref{equation:H_f^e} yields
the proposition.

\end{proof}

\begin{thm}\label{theorem:actionE_1}
  We have
  \[
  \frac{c_f}{c_{f-1}}
  =-\frac{(q-q^{-1})q^{2f-2}\sum_{k=0}^{f-1}q^{2k}}{rq^{2n-2f-2}-q^{-1}}
  =-\frac{q^{4f-3}-q^{2f-3}}{rq^{2n-2f-2}-q^{-1}}
  \]
  for $f\geq 1$.
\end{thm}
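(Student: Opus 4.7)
The plan is to exploit the defining property $xE_1 = 0$ of the symmetrizer together with the formula for $y_fE_1$ just established. By Lemma~\ref{lemma:huxiao} we have $x = \sum_{f} c_f x_f$, and the identity $y_f = (1+qr^{-1})^f a_f x_f$ derived in the computation preceding this theorem lets us rewrite
\[
xE_1 = \sum_f \frac{c_f}{(1+qr^{-1})^f a_f}\, y_f E_1 = 0.
\]

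Setting $A_f := \widehat{\Sym_n}\widehat{E_f}\widehat{\HH_{f-1}^{\{3,\ldots,n\}}}$, the second summand in the previous proposition coincides with $A_{f+1}$ (with the convention $A_{[n/2]+1}=0$), so the proposition reads $y_0E_1 = A_1$ and, for $f\geq 1$,
\[
y_f E_1 = \frac{(rq^{2n-2f-2}-q^{-1})(1+qr^{-1})}{q-q^{-1}}\, A_f + q^{2f}\, A_{f+1}.
\]
Substituting and regrouping by $A_f$ expresses $xE_1 = 0$ as $\sum_f \lambda_f A_f = 0$. To conclude that each $\lambda_f$ vanishes, I would invoke the filtration of $B_n$ by the ideals $I_f$ spanned by tangles with at least $2f$ horizontal edges: each $A_f$ lies in $I_f$ and, when expanded in the basis of Lemma~\ref{lemma:basis}, is a non-zero combination of basis tangles of rank exactly $f$. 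Thus the classes of distinct $A_f$ live in distinct graded components of the associated graded algebra, and the coefficients in any vanishing relation must vanish separately.

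Equating $\lambda_f$ to zero for $f\geq 1$ yields
\[
\frac{c_f}{(1+qr^{-1})^f a_f}\cdot\frac{(rq^{2n-2f-2}-q^{-1})(1+qr^{-1})}{q-q^{-1}} + \frac{c_{f-1}\, q^{2f-2}}{(1+qr^{-1})^{f-1} a_{f-1}} = 0.
\]
Cancelling the common factor $(1+qr^{-1})^{f-1}$ and using $a_f/a_{f-1} = \sum_{k=0}^{f-1} q^{2k}$ (immediate from the definition of $a_f$ in Proposition~\ref{prop:e_fH_f}) gives the first form of the theorem. The second form follows from $\sum_{k=0}^{f-1} q^{2k} = (q^{2f}-1)/(q^2-1)$ together with $q - q^{-1} = q^{-1}(q^2-1)$. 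The main conceptual obstacle is the independence argument for the $A_f$; once this is justified by the horizontal-edge filtration, the rest is algebraic bookkeeping and the recursion is forced.
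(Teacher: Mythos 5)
Your proposal is correct and takes essentially the same route as the paper: both substitute the formula for $y_fE_1$ into $xE_1=0$ via $y_f=(1+qr^{-1})^fa_f x_f$, regroup by the elements $A_{f+1}=\widehat{\Sym_n}\widehat{E_{f+1}}\widehat{\HH_f^{\{3,\ldots,n\}}}$ (the paper's $\EE_f$), and extract the recursion from their linear independence. The only cosmetic difference is that you justify the independence by the horizontal-edge filtration, whereas the paper does so by rewriting $\EE_f$ via Proposition~\ref{prop:e_fH_f} as an explicit nonzero combination of the basis elements of Lemma~\ref{lemma:basis} with exactly $f+1$ horizontal edges; these are the same argument.
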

\begin{proof}
  Let $b_0=0$ and
  $b_f=\frac{(1+qr^{-1})(rq^{2n-2f-2}-q^{-1})}{q-q^{-1}}$ for $f\geq 1$,
  $d_f=q^{2f}$ and $\EE_f= \widehat{\Sym_n}\widehat{E_{f+1}}
  \widehat{\HH_f^{\{3,\ldots,n\}}}$.  Note that in view of
  Proposition~\ref{prop:e_fH_f},
  \[\EE_f=
  a_{f+1}(1+qr^{-1})^{f+1}\widehat{\HH_{f+1}^{-1}}
  \widehat{\Sym_{\{2f+1,\ldots,n\}}}
  \widehat{E_{f+1}}
  \widehat{\HH_f^{\{3,\ldots,n\}}},\]
  and this is a linear combination
  of basis elements. In particular, the set $\{\EE_f,\mid 0\leq f\leq
  [\frac{n}2]\}$ is linearly  independent.
  We have
  \begin{align*}
    0=\sum_{0\leq f\leq [n/2]}c_fx_fE_1&=
    \frac{1}{a_f(1+qr^{-1})^f}\sum_{0\leq f\leq [n/2]}c_fy_fE_1\\
    &=
    \sum_{0\leq f\leq [n/2]}\left(\frac{b_fc_f}{a_f(1+qr^{-1})^f}\EE_{f-1}
    +\frac{d_fc_f}{a_f(1+qr^{-1})^f}\EE_f\right)
  \end{align*}
  It follows that  for $f\geq 1$
  \[
  \frac{b_fc_f}{a_f(1+qr^{-1})^f}
  +\frac{d_{f-1}c_{f-1}}{a_{f-1}(1+qr^{-1})^{f-1}}=0
  \]
  and thus
  \begin{align*}
    \frac{c_f}{c_{f-1}}
    &=-\frac{d_{f-1}a_f(1+qr^{-1})^{f}}{a_{f-1}(1+qr^{-1})^{f-1}b_f}
    =-\frac{d_{f-1}(1+qr^{-1})\sum_{k=0}^{f-1}q^{2k}}{b_f}\\
    &=-\frac{q^{2f-2}(1+qr^{-1})(q-q^{-1})\sum_{k=0}^{f-1}q^{2k}}
    {(1+qr^{-1})(rq^{2n-2f-2}-q^{-1})}
    =-\frac{q^{2f-2}(q-q^{-1})\sum_{k=0}^{f-1}q^{2k}}
    {(rq^{2n-2f-2}-q^{-1})}
  \end{align*}
\end{proof}
\begin{cor}\label{cor:actionE_iT_i}
  Let $x=\sum_{f=0}^{[\frac{n}2]}c_fx_f$ be as in
  Theorem~\ref{theorem:actionE_1}. Then we have
  \begin{align*}
    xE_i&=0\\
    xT_i&=qx
  \end{align*}
for $i=1,\ldots,n-1$.
\end{cor}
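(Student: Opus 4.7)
The plan is to derive the corollary by a uniqueness argument, observing that Theorem \ref{theorem:actionE_1} already determines $x$ as the symmetrizer up to a scalar. By Lemma \ref{lemma:huxiao}, the honest symmetrizer $\tilde x \in B_n$---which by its defining properties satisfies $\tilde x E_i = 0$ and $\tilde x T_i = q\tilde x$ for every $i \in\{1,\ldots,n-1\}$---admits an expansion $\tilde x = \sum_f \tilde c_f x_f$.

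Next I would observe that the entire computation carried out in the proof of Theorem \ref{theorem:actionE_1} uses only the single identity $\tilde x E_1 = 0$ together with the linear independence of the family $\{\EE_f\}_{0\leq f\leq[n/2]}$ supplied by Proposition \ref{prop:e_fH_f}. Applied verbatim to $\tilde x$ in place of $x$, the argument forces the $\tilde c_f$ to obey exactly the same first-order recursion
\[
\frac{\tilde c_f}{\tilde c_{f-1}} = -\frac{q^{4f-3}-q^{2f-3}}{rq^{2n-2f-2}-q^{-1}}
\]
as our $c_f$. Hence the sequences $(c_f)$ and $(\tilde c_f)$ are proportional, and our explicit $x$ is a scalar multiple of $\tilde x$.

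It remains only to observe that this scalar of proportionality is nonzero. Normalising $c_0 = 1$ makes every $c_f$ nonzero, since each factor in the recursion has nonvanishing numerator and denominator over $\mathbb{Q}(r,q)$, so $x \neq 0$. To see that $\tilde c_0 \neq 0$, note that under the natural surjection $B_n \twoheadrightarrow B_n/\langle E_1,\ldots,E_{n-1}\rangle$ onto the Hecke algebra of type $A$, the element $x_0 = \widehat{\Sym_n}$ descends to the (nonzero) Hecke symmetrizer while every $x_f$ with $f\geq 1$ is killed; the image of $\tilde x$ must itself be a nonzero scalar multiple of the Hecke symmetrizer, forcing $\tilde c_0 \neq 0$ (if instead $\tilde c_0 = 0$, the recursion would propagate $\tilde c_f = 0$ for all $f$, contradicting $\tilde x \neq 0$). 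Consequently $x = \lambda \tilde x$ with $\lambda \neq 0$, and the two identities $xE_i = 0$ and $xT_i = qx$ for $i = 1,\ldots,n-1$ follow immediately.

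The only mildly subtle point is the non-vanishing of $\tilde c_0$, which I would establish exactly as above by reduction to the Hecke quotient; every other ingredient is already in place, and no further diagrammatic computation is needed beyond Theorem \ref{theorem:actionE_1}.
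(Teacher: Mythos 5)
Your argument is logically sound and does prove the stated identities, but it takes a genuinely different route from the paper. You deduce the corollary from the a priori existence of the symmetrizer: Lemma~\ref{lemma:huxiao} (quoted from Hu--Xiao) supplies a nonzero $\tilde x=\sum_f\tilde c_f x_f$ with $\tilde xE_i=0$ and $\tilde xT_i=q\tilde x$ for all $i$; the computation in Theorem~\ref{theorem:actionE_1} forces the same first-order recursion on $(\tilde c_f)$; and your non-vanishing step (the Hecke-quotient argument, or more simply the observation that the underlying linear relation $\frac{b_f\tilde c_f}{a_f(1+qr^{-1})^f}+\frac{d_{f-1}\tilde c_{f-1}}{a_{f-1}(1+qr^{-1})^{f-1}}=0$ has nonzero coefficients, so the $\tilde c_f$ are either all zero or all nonzero) yields $x=\lambda\tilde x$ with $\lambda\neq 0$, whence the identities transfer. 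The paper instead verifies the identities directly: each $x_f$ has $\widehat{\Sym_n}$ as a right factor, so the relation $\widehat{\Sym_n}T_i=q\widehat{\Sym_n}+\frac{1-q^2}{r+q}\widehat{\Sym_n}E_i$ reduces everything to $xE_i=0$; the case $i=1$ comes from the computation proving Theorem~\ref{theorem:actionE_1}, and the general case follows by sliding the horizontal edge of $E_1$ across $\widehat{\Sym_n}$ via Lemma~\ref{lemma:ESym_n}. The trade-off is real: your proof is shorter and needs no further diagrammatic work, but it is parasitic on the externally quoted existence of the symmetrizer, so it reduces the corollary to ``the symmetrizer is the symmetrizer'' and cannot support the Remark immediately following, which asserts that this corollary furnishes an \emph{independent} proof of Lemma~\ref{lemma:huxiao}. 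The paper's direct verification is precisely what makes the construction self-contained.
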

\begin{proof}
  First note that $x_f=\frac{1}{a_f(1+qr^{-1})^f}\widehat{\HH_f^{-1}}
  \widehat{E_f}\widehat{\Sym_n}$ by the same arguments that showed
  $x_f=\frac{1}{a_f(1+qr^{-1})^f}y_f$. Thus $x=y
  \widehat{E_f}\widehat{\Sym_n}$ for some $y\in B_n$. In the proof of
  Lemma~\ref{lemma:SymDD} we showed that $\widehat{\Sym_n}T_i=
  q\widehat{\Sym_n}+\frac{1-q^2}{r+q}\widehat{\Sym_n}E_i$. Thus it
  suffices to show that $xE_i=0$ for $i=1,\ldots,n-1$.

  For $i=1$ this
  holds by definition of $x$. Note that in the proof of
  Theorem~\ref{theorem:actionE_1}, the bottom horizontal edge of $E_1$
  was not involved in our calculations. Thus we actually have
  \[
   \wtangle{84}{3cm}=0
  \]
  But then by Lemma~\ref{lemma:ESym_n},
  \begin{align*}
    xE_i&=
    \wtangle{85}{3cm}
    = \wtangle{86}{3cm}
    = \wtangle{87}{3cm}=0
  \end{align*}

\end{proof}

\begin{remark}
  Note that Corollary~\ref{cor:actionE_iT_i} provides
  an alternative proof of
  Lemma~\ref{lemma:huxiao}.
\end{remark}
In particular, as an application we can now
prove  a conjecture of \cite{huxiao}.
For $l\in\mathds{N}$ let
$B_{n,q}:=B_n(-q^{2l+1},q)$ be the specialized BMW-algebra over
$\mathds{Q}(q)$ and let
$V$ be a $2l$-dimensional vector space over $\mathds{Q}(q)$. Then
$B_{n,q}$ acts on
tensor space $V^{\otimes n}$ and satisfies Schur-Weyl
duality together with the action of the symplectic quantum group
$U_q(\mathfrak{sp}_{2l})$ on
tensor space. In \cite{huxiao}, a generator of the annihilator
 in $B_{n,q}$ of $V^{\otimes n}$ for $n>l$ is constructed. It is given
 as the specialized  antisymmetrizer
$y=y_{l+1}$ of $B_{l+1,q}$, embedded into $B_{n,q}$.
The conjecture proposed in \cite[Page 2921, line
-11]{huxiao} now states:
\begin{cor}
  The generator $y=y_{l+1}$ of the annihilator can be rescaled, such that the
  coefficient of the basis element $T_uT_\sigma\hat{E_f}T_d$  (see
  Lemma~\ref{lemma:basis}) in $y$ is $q^k\cdot(-q)^{-l(u)-l(\sigma)-l(d)}$ for
  some $k\in\mathds{Z}$.
  In particular, it is up to sign a power of $q$.
\end{cor}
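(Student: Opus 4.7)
The plan is to reduce the statement to a direct computation using the explicit ratio proved in Theorem~\ref{theorem:actionE_1}. First, I would observe that the $\mathds{Q}$-linear ring automorphism $\varphi\colon B_n\to B_n$ fixing $T_i$, $E_i$, $r$ and sending $q\mapsto -q^{-1}$ is an involution that carries the symmetrizer $x$ to the antisymmetrizer $y$. Under $\varphi$, every weighted sum $\widehat{S}=\sum_{w\in S}q^{\ell(w)}T_w$ becomes $\sum_{w\in S}(-q)^{-\ell(w)}T_w$, while $\widehat{E_f}$ is preserved. Applying $\varphi$ to $x=\sum_f c_fx_f$ (Lemma~\ref{lemma:huxiao}), using $x_f=\widehat{\HH_f^{-1}}\widehat{\Sym_{\{2f+1,\ldots,n\}}}\widehat{E_f}\widehat{\HH_f}$, and appealing to the linear independence of the basis in Lemma~\ref{lemma:basis}, the coefficient of $T_uT_\sigma\widehat{E_f}T_d$ in $y$ equals
\begin{equation*}
  c'_f\cdot(-q)^{-\ell(u)-\ell(\sigma)-\ell(d)},\qquad\text{where } c'_f:=\varphi(c_f)\in\mathds{Q}(r,q).
\end{equation*}

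It therefore suffices to show that, after specializing $r=-q^{2l+1}$ and $n=l+1$, each ratio $c'_f/c'_0\in\mathds{Q}(q)$ is a pure power of $q$; rescaling $y$ by $c'_0$ then gives the claimed form with $k\in\mathds{Z}$ depending only on $f$. Applying $\varphi$ to the first expression for $c_f/c_{f-1}$ in Theorem~\ref{theorem:actionE_1} yields
\begin{equation*}
  \frac{c'_f}{c'_{f-1}}=-\frac{(q-q^{-1})\,q^{-2f+2}\sum_{k=0}^{f-1}q^{-2k}}{rq^{-(2n-2f-2)}+q}.
\end{equation*}
Under the specialization, the denominator becomes $-q^{2f+1}+q=-q(q^{2f}-1)$, and using $q-q^{-1}=q^{-1}(q^2-1)$ together with $\sum_{k=0}^{f-1}q^{-2k}=q^{-2f+2}(q^{2f}-1)/(q^2-1)$, the numerator collapses to $q^{-4f+3}(q^{2f}-1)$. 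The factors $q^{2f}-1$ cancel and one obtains $c'_f/c'_{f-1}=q^{-4f+2}$.

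Telescoping gives $c'_f/c'_0=\prod_{j=1}^f q^{-4j+2}=q^{-2f^2}$. Consequently, after rescaling $y$ by $c'_0$, the coefficient of $T_uT_\sigma\widehat{E_f}T_d$ equals $q^{-2f^2}\cdot(-q)^{-\ell(u)-\ell(\sigma)-\ell(d)}$, which is of the stated form with $k=-2f^2\in\mathds{Z}$, and is in particular a signed power of $q$. The main content of the argument is the clean cancellation of the factor $q^{2f}-1$ in the specialized ratio, which depends crucially on the specific values $r=-q^{2l+1}$ and $n=l+1$; for generic $r$ the ratio remains an honest rational function in $r$ and $q$ rather than a monomial, so no such statement could possibly hold before specialization.
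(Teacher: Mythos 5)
Your proposal is correct and follows essentially the same route as the paper: apply the substitution $q\mapsto -q^{-1}$ to the ratio from Theorem~\ref{theorem:actionE_1}, specialize $r=-q^{2l+1}$ and $n=l+1$, and observe that the factor $q^{2f}-1$ cancels to leave $c'_f/c'_{f-1}=q^{2-4f}$, exactly as in the paper. The only (harmless) addition is that you telescope to record the explicit exponent $k=-2f^2$, which the paper leaves implicit.
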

\begin{proof}
  Let $c_f(r,q)$ be the coefficients $c_f$ from
  Lemma~\ref{lemma:huxiao}. Then the
  antisymmetrizer has the form
  $$y=\sum_{0\leq f\leq [n/2]}c_f(r,-q^{-1})y_f$$
  with $y_f=\sum_{u,\sigma,d} (-q)^{-l(u)-l(\sigma)-l(d)}T_uT_\sigma\hat{E_f}T_d$.
  We have
  $$
  \frac{c_f(r,-q^{-1})}{c_{f-1}(r,-q^{-1})}
  =\frac{q^{3-4f}(1-q^{2f})}{q+rq^{2+2f-2n}}.
  $$
  Now  specializing $r$ to $-q^{2l+1}$ and $n$ to $l+1$ (as in the
  setting of \cite[Theorem 5.4]{huxiao}),
  we get $q^{2-4f}$, which is a power of $q$.
  This
  proves the conjecture proposed in \cite{huxiao}.
\end{proof}
\bigskip

\centerline{Acknowledgement}
\medskip

The second author was supported, in part, by the National Natural Science Foundation of China. He also thanks the Alexander von Humboldt Foundation for supporting a working visit in Stuttgart in 2011.

\end{document}